\DeclareMathOperator{\End}{End}
\DeclareMathOperator{\Ext}{Ext}
\DeclareMathOperator{\gld}{gld}
\DeclareMathOperator{\hdet}{hdet}
\DeclareMathOperator{\Hom}{Hom}
\DeclareMathOperator{\HOM}{HOM}
\DeclareMathOperator{\pd}{pd}
\DeclareMathOperator{\Tor}{Tor}
\DeclareMathOperator{\Z}{Z}
\DeclareMathOperator{\id}{id}
\newcommand{\To}{\longrightarrow}
\numberwithin{equation}{section}
\theoremstyle{definition}
\newtheorem{thm}{Theorem}[section]
\newtheorem{prop}[thm]{Proposition}
\newtheorem{lem}[thm]{Lemma}
\newtheorem{cor}[thm]{Corollary}
\newtheorem{defn}[thm]{Definition}
\newtheorem{rk}[thm]{Remark}
\newtheorem{ex}[thm]{Example}
\begin{document}
	
\title{Yoneda Ext-algebras of Takeuchi smash products}
	


\author{Quanshui Wu}
\address{School of Mathematical Sciences, Fudan University, Shanghai 200433, China}
\email{qswu@fudan.edu.cn}

\author{Ruipeng Zhu}
\address{Department of Mathematics, Southern University of Science and Technology, Shenzhen, Guangdong 518055, China}
\email{zhurp@sustech.edu.cn}

\begin{abstract}
We prove that the Yoneda Ext-algebra of a Takeuchi smash product is the graded Takeuchi smash product of the Yoneda Ext-algebras of the two algebras or modules involved. As an application, we prove that graded Takeuchi smash products preserve Artin-Schelter regularity, and describe the Nakayama automorphism of the product.
\end{abstract}
\subjclass[2020]{
16E30, 
16E45, 
16S40, 
16E65 
}

\keywords{Yoneda Ext-algebras, Takeuchi smash products, AS-regular algebras}

\thanks{}

\maketitle

\section*{Introduction}

Let $H$ be a Hopf algebra with bijective antipode, $A$ be a left $H$-module algebra and $B$ be a left $H$-comodule algebra. There is a canonical algebra construction on the space $A\otimes B$ called the Takeuchi smash product of $A$ and $B$, denoted by $A\#B$ (see Definition \ref{defi-Take-product}).
Usual smash products and Ore extensions are examples of Takeuchi smash products (see Example \ref{ore-ext-Hopf-alg}).
%

If $A$ is a positively graded algebra and $G$ is a finite group acting gradedly on $A$,  
Martinez-Villa \cite[Theorem 10]{Mar} proved that the Yoneda Ext-algebra of the skew  group algebra $A\#G$ is isomorphic to the skew group algebra $\Ext^\bullet_A(A_0, A_0) \# G$ under some mild conditions. 
Bergh and Oppermann proved that the Yoneda Ext-algebra of a class of Takeuchi smash product (see Example \ref{a-class-twisted-tensor-product}) is isomorphic to the Takeuchi smash product of the Yoneda Ext-algebras \cite[Theorem 3.7]{BO}.
The main purpose of this paper is to give a precise description of the Yoneda Ext-algebras of the general Takeuchi smash products.

Let $M$ be a left $A\#H$-module and $X$ be a left $(H, B)$-Hopf module. Then there is a natural left $A\#B$-module structure on $M\otimes X$ denoted by $M\#X$, which is called  the Takeuchi smash product module of $M$ and $X$. If $P_\bullet \to M \to 0$ is a projective $A\#H$-module resolution of $M$, then $\Hom^\bullet_A(P_\bullet, P_\bullet)$ is a differential graded left $H$-module algebra and 
$\Ext^\bullet_{A}(M, M) = \bigoplus_{n \geq 0} \Ext^n_{A}(M, M)$ is a graded left $H$-module algebra. If $X$ has a resolution $Q_\bullet \to X \to 0$ in the category of left $(H, B)$-Hopf modules such that each $Q_i$ is finitely generated as $B$-module, then $\Hom^\bullet_A(Q_\bullet, Q_\bullet)$ is a differential graded left $H$-comodule algebra and $\Ext^\bullet_{B}(X, X)$ is a graded left $H$-comodule algebra. Therefore, we can form the graded Takeuchi smash product algebra $\Ext^\bullet_{A}(M, M) \# \Ext^\bullet_{B}(X, X)$.

The following is one of the main results in this paper, which generalizes the result in the usual tensor product case,  and \cite[Theorem 10]{Mar}, \cite[Theorem 3.7]{BO}. Theorem \ref{Ext-alg of smash product} is the  right-sided version of it. 

\begin{thm} \label{thm1}
	Suppose that $M$  is a left $A\#H$-module which is pseudo-coherent as $A$-module, and $X$ is a left $(H, B)$-Hopf module which is pseudo-coherent as $B$-module. Then, as graded algebras,
	$$\Ext^\bullet_{A\#B}(M\#X, M\#X) \cong \Ext^\bullet_{A}(M, M) \# \Ext^\bullet_{B}(X, X)$$
	where the last $\#$ means graded Takeuchi smash product.
\end{thm}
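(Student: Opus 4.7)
My plan has four stages: choose explicit resolutions, combine them into a projective resolution of $M\# X$, identify the endomorphism DG algebra as a DG Takeuchi smash product, and pass to cohomology. Using the pseudo-coherence hypotheses, I would first choose a projective resolution $P_\bullet \to M$ in left $A\# H$-modules whose terms are finitely generated as $A$-modules, and a resolution $Q_\bullet \to X$ in $(H,B)$-Hopf modules whose terms are finitely generated as $B$-modules. These equip $\Hom^\bullet_A(P_\bullet, P_\bullet)$ with the structure of a DG left $H$-module algebra and $\Hom^\bullet_B(Q_\bullet, Q_\bullet)$ with that of a DG left $H$-comodule algebra, as recalled in the introduction, so the graded Takeuchi smash product on the right-hand side of the theorem is well defined.

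Next, I would form the total complex $R_\bullet := P_\bullet \# Q_\bullet$ equipped with the natural $A\# B$-action coming from the Takeuchi smash-product module construction, and show that $R_\bullet \to M \# X$ is a projective resolution over $A\# B$. Projectivity of each term reduces to the claim that the smash product of a projective $A\# H$-module with a $B$-projective $(H,B)$-Hopf module is projective over $A\# B$, which I would derive from the Hopf-module adjunction together with the description of the $A\# B$-action on smash-product modules. Acyclicity is a standard double-complex / K\"unneth argument over the base field.

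The crucial step --- and the one I expect to be the main obstacle --- is the DG-algebra isomorphism
$$\Hom^\bullet_{A\# B}(R_\bullet, R_\bullet) \;\cong\; \Hom^\bullet_A(P_\bullet, P_\bullet) \;\#\; \Hom^\bullet_B(Q_\bullet, Q_\bullet),$$
where the right side is the DG Takeuchi smash product. The finite-generation hypotheses ensure that the natural comparison map $\Hom_A(P_i, P_k) \otimes \Hom_B(Q_j, Q_\ell) \to \Hom_{A\# B}(P_i \otimes Q_j, P_k \otimes Q_\ell)$ is a bijection, so both sides agree as graded vector spaces. The nontrivial task is to verify that composition of $\Hom$-maps on the left, transported through this identification, reproduces the Takeuchi-twisted multiplication on the right with the correct $H$-action on $\Hom^\bullet_A$ (induced from the $A\# H$-structure of each $P_i$) and $H$-coaction on $\Hom^\bullet_B$ (induced from the Hopf-module structure of each $Q_j$). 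This requires carefully tracking how the twists in the $A\# B$-multiplication and in the Hopf-module actions recombine to yield the smash product multiplication on endomorphism complexes, and the compatibility of differentials must also be checked.

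Finally, I would pass to cohomology. The left-hand side yields $\Ext^\bullet_{A\# B}(M\# X, M\# X)$ by the second stage. The underlying chain complex of the right-hand side is the ordinary tensor product $\Hom^\bullet_A(P_\bullet, P_\bullet) \otimes \Hom^\bullet_B(Q_\bullet, Q_\bullet)$ (only the multiplication is twisted), so K\"unneth over the base field identifies its cohomology with $\Ext^\bullet_A(M, M) \otimes \Ext^\bullet_B(X, X)$ as a graded vector space. Naturality of the Takeuchi smash construction in DG $H$-(co)module algebras then upgrades this to an isomorphism of graded algebras with $\Ext^\bullet_A(M, M) \# \Ext^\bullet_B(X, X)$, completing the proof.
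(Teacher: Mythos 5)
Your overall architecture (total complex of resolutions, identify the endomorphism DG algebra with a DG Takeuchi smash product, pass to cohomology) matches the paper's, but there is a genuine gap at your first and third stages. You propose to choose a projective resolution $P_\bullet \to M$ in ${}_{A\#H}\mathcal{M}$ \emph{whose terms are finitely generated as $A$-modules}, and you need this to make the comparison map $\Hom_A(P_i,P_k)\otimes\Hom_B(Q_j,Q_\ell)\to\Hom_{A\#B}(P_i\#Q_j,P_k\#Q_\ell)$ bijective, hence to get an honest DG-algebra \emph{isomorphism} $\Hom^\bullet_{A\#B}(R_\bullet,R_\bullet)\cong\Hom^\bullet_A(P_\bullet,P_\bullet)\#\Hom^\bullet_B(Q_\bullet,Q_\bullet)$. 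Such a resolution does not exist in general: a projective $A\#H$-module is an $A$-module direct summand of a direct sum of copies of $A\#H\cong A\otimes H$, so when $H$ is infinite-dimensional it is typically not finitely generated over $A$. Concretely, for $A=k$, $H=k[x]$ with $x$ primitive and $M=k$, every nonzero projective $A\#H=k[x]$-module is infinite-dimensional over $A=k$. The asymmetry with the comodule side is essential here: for $X$ one \emph{can} arrange terms $B\otimes V_n$ with $V_n$ finite-dimensional (this uses the Finiteness Theorem for comodules), but there is no analogue on the $H$-module side. Without termwise finite generation of $P_\bullet$ over $A$, the comparison map is not surjective and your claimed DG isomorphism fails already as graded vector spaces, so the K\"unneth step in your last stage is not available in the form you state it.

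The paper circumvents this by proving less at the DG level: it only constructs a DG algebra \emph{morphism} $\varphi:\End^\bullet_A(P_\bullet)\#\End^\bullet_B(Q_\bullet)\to\End^\bullet_{A\#B}(\mathrm{Tot}(P_\bullet\#Q_\bullet))$ (Proposition \ref{Hom-mod-comod-dga}) and then shows it is a \emph{quasi-isomorphism}. The quasi-isomorphism is where pseudo-coherence of $M$ over $A$ actually enters: one takes a separate finitely generated $A$-projective resolution $\widetilde P_\bullet$ of $M$, uses that $P_\bullet$ and $\widetilde P_\bullet$ are homotopy equivalent over $A$ to see that $\Hom_A(P_\bullet,M')\otimes X'\to\Hom_A(P_\bullet,M'\#X')$ is a quasi-isomorphism, and feeds this into the adjunction $\Hom_{A\#B}(M\#P_\bullet\ldots)\cong\Hom_B(Q_\bullet,\Hom_A(P_\bullet,-))$ to get the Ext decomposition (Proposition \ref{decom-Ext-group}). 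To repair your argument you would need to either adopt this morphism-plus-quasi-isomorphism strategy, or justify the existence of a resolution of $M$ by $A\#H$-modules that are finitely generated projective over $A$ (not over $A\#H$), which is a nontrivial relative-homological-algebra assumption you have not established.
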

A module is called pseudo-coherent if it has a finitely generated projective resolution.

The other result in this paper is the graded Takeuchi smash product of AS-regular algebras is AS-regular (see Theorem \ref{AS-regular-TSP}). In the case that $H$ is trivial or the actions of $H$ is trivial, the result is well known.

\begin{thm} \label{thm2}  Let $A$ and $B$ be AS-regular algebras of dimension $d_1$ and $d_2$ respectively. If $H$ is a positively graded Hopf algebra, $A$ is an $H$-module graded algebra and $B$ is an $H$-comodule graded algebra, 
then $A\#B$ is an AS-regular algebra of dimension $d_1+d_2$.
\end{thm}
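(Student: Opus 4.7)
The plan is to derive the AS-regularity of $A\#B$ from the Takeuchi smash product description of Yoneda Ext-algebras supplied by Theorem \ref{thm1}, combined with the standard characterization of AS-regular algebras via Frobenius Ext-algebras.

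First, I would specialize Theorem \ref{thm1} (or its right-sided analog \ref{Ext-alg of smash product}) to $M=k=A_0$, viewed as the trivial left $A\#H$-module, and $X=k=B_0$, viewed as the trivial left $(H,B)$-Hopf module. Since $A$ and $B$ are AS-regular, the trivial modules admit finite projective resolutions by finitely generated projectives, so both are pseudo-coherent. One checks $k\#k\cong k$ as left $A\#B$-modules, and Theorem \ref{thm1} then yields the isomorphism of graded algebras
\begin{equation*}
\Ext^\bullet_{A\#B}(k,k)\;\cong\;\Ext^\bullet_A(k,k)\;\#\;\Ext^\bullet_B(k,k),
\end{equation*}
where $\Ext^\bullet_A(k,k)$ carries the induced $H$-action and $\Ext^\bullet_B(k,k)$ the induced $H$-coaction.

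Second, I would invoke the standard characterization (due to Lu--Palmieri--Wu--Zhang, extending Smith) that a noetherian connected graded algebra $R$ is AS-regular of dimension $d$ if and only if $\Ext^\bullet_R(k,k)$ is a finite-dimensional graded Frobenius algebra supported in degrees $[0,d]$ with one-dimensional top component. Applied to $A$ and $B$, this produces Frobenius structures on $\Ext^\bullet_A(k,k)$ and $\Ext^\bullet_B(k,k)$ with top degrees $d_1$ and $d_2$ respectively.

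The main obstacle, and the crux of the argument, is to prove that the graded Takeuchi smash product of two such Frobenius algebras is again graded Frobenius with one-dimensional top now sitting in degree $d_1+d_2$. The natural candidate for the Frobenius trace is the tensor product of the two individual traces restricted to the one-dimensional top component, but verifying nondegeneracy of the associated pairing in the twisted Takeuchi setting requires showing that the Nakayama automorphisms of $\Ext^\bullet_A(k,k)$ and $\Ext^\bullet_B(k,k)$ interact compatibly with the $H$-action and $H$-coaction. This bookkeeping is exactly what also produces the explicit Nakayama automorphism of $A\#B$ advertised in the abstract. Applying the Frobenius characterization in the reverse direction then concludes that $A\#B$ is AS-regular of dimension $d_1+d_2$; any noetherianity or growth hypothesis possibly included in the definition of AS-regular transfers from $A$ and $B$ to $A\#B$ via the graded vector space identification $A\#B\cong A\otimes B$, under which Hilbert series and GK-dimension behave as in the untwisted tensor product.
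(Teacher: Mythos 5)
Your proposal diverges fundamentally from the paper's proof, and it has two genuine gaps. The first is that Theorem \ref{thm1} (and its right-sided version, Theorem \ref{Ext-alg of smash product}) requires $X$ to be a left $(H,B)$-Hopf module, and the trivial module $k=B/B_{\geq 1}$ carries such a structure only if $B_{\geq 1}$ is an $H$-subcomodule, i.e.\ $\rho(B_{\geq 1})\subseteq H\otimes B_{\geq 1}$. This is \emph{not} implied by $H$ being positively graded: for an $H$-comodule graded algebra one only has $\rho(B_n)\subseteq\bigoplus_{0\le i\le n}H_i\otimes B_{n-i}$, which allows a component in $H_n\otimes B_0$. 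Indeed it already fails for the Ore extension of Example \ref{ore-ext-Hopf-alg}, where $B=k[X]$ and $\rho(X)=g\otimes X+X\otimes 1$. So your very first step, forming $\Ext^\bullet_{A\#B}(k,k)\cong\Ext^\bullet_A(k,k)\#\Ext^\bullet_B(k,k)$ via Theorem \ref{thm1}, is unavailable under the stated hypotheses. The second gap is that the step you yourself identify as ``the main obstacle and the crux'' --- that a graded Takeuchi smash product of two graded Frobenius algebras is again graded Frobenius, with compatible interaction of the Nakayama automorphisms with the $H$-(co)action --- is described but never proved. There is also a hypothesis mismatch: the Frobenius characterization of AS-regularity via $\Ext^\bullet_R(k,k)$ needs $R$ noetherian (or at least Adams connected with finite-dimensional Ext-algebra), and noetherianity of $A\#B$ does not follow from that of $A$ and $B$; note the theorem makes no noetherian assumption, and the paper's later proposition on Nakayama automorphisms explicitly has to \emph{assume} $A\#B$ noetherian.

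The paper's proof (Theorem \ref{AS-regular-TSP}) avoids the Ext-algebra entirely and verifies the Gorenstein condition directly: it uses $\pd_{A\#B}k\le\pd_Ak+\pd_Bk$ (Corollary \ref{proj-mod of smash product}) together with the change-of-rings spectral sequence $\Ext^p_B(k,\Ext^q_A(k,A\#B))\Rightarrow\Ext^{p+q}_{A\#B}(k,A\#B)$ of Proposition \ref{Ext-spec-seq-of-smash-prod-mod}, which only needs $k$ to be a plain $B$-module, not a Hopf module. The essential computation is that $\Ext^{d_1}_A(k,A\#B)\cong\Ext^{d_1}_A(k,A)\otimes B\cong{}^{\Xi^l_\alpha}B\cong B$ as left $B$-modules, where $\alpha:H\to k$ is the character by which $H$ acts on the one-dimensional space $\Ext^{d_1}_A(k,A)$; then $\Ext^{d_1+d_2}_{A\#B}(k,A\#B)\cong\Ext^{d_2}_B(k,B)\cong k$. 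If you want to salvage your route, you would need to add the hypothesis $\rho(B_{\geq 1})\subseteq H\otimes B_{\geq 1}$ (thereby excluding Ore extensions), supply a proof of the Frobenius property of the graded Takeuchi smash product, and deal with the noetherianity issue; the paper's direct spectral-sequence argument sidesteps all three.
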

The Nakayama automorphism of $A\#B$ is described as an application of Theorems \ref{thm1} and \ref{thm2}.

The paper is organized as follows.
In section $1$, we define Takeuchi smash products of algebras or modules, and give some basic facts.
In section $2$, we study homological properties of Takeuchi smash products, in particular, the Tor-groups and Ext-groups.
In section $3$, we introduce Takeuchi smash products of differential graded algebras, and prove Theorem \ref{Ext-alg of smash product}, that is, the Yoneda Ext-algebra of the Takeuchi smash product of modules is the graded Takeuchi smash product of the Yoneda Ext-algebras of the modules involved.
In section $4$, we prove that Takeuchi smash product preserves the AS-regularity, and describe the Nakayama automorphism of the product.

\section{Takeuchi smash products}

Throughout, let $k$ be a field. All algebras and modules are over $k$, and the unmarked tensor product $\otimes$ (resp. $\Hom$) means $\otimes_k$ (resp. $\Hom_k$).
For the basic concept and theory concerning Hopf algebras we refer to \cite{Sw} and \cite{Mo} as basic references.

\subsection{Takeuchi smash products of algebras}
Let $H$ be a Hopf algebra, $A$ be a left $H$-module algebra and $B$ be a left $H$-comodule algebra \cite[Definitions 4.1.1, 4.1.2]{Mo}. The following construction (see \cite[section 8]{Tak} and the references therein) is a generalization of smash products, which is now called Takeuchi smash product in literature.

\begin{defn} \label{defi-Take-product} The Takeuchi smash product $A\#B$ of $A$ and $B$ is the associative algebra with  underlying $k$-space $A\otimes B$, and multiplication defined by
$$(a\#b) (a'\#b') := \sum_{(b)}a(b_{-1} \rightharpoonup a') \# b_0b'$$
for any $a,a' \in A$ and $b,b' \in B$, where $\sum_{(b)} b_{-1} \otimes b_0 = \rho(b)$ is given by the comodule structure map of $B$.
\end{defn}


If $B=H$, the Takeuchi smash product $A\#B$ is the usual smash product $A\#H$ \cite[Definition 4.1.3]{Mo}.

\begin{ex}\label{ore-ext-Hopf-alg}
Any Ore extension $A[x, \sigma, \delta]$ can be viewed as a Takeuchi smash product, where $A$ is a $k$-algebra, $\sigma$ is an algebra automorphism of $A$ and $\delta$ is a $\sigma$-derivation of $A$.

In fact, let $H = k\langle g^{\pm 1}, X\rangle$ be the free algebra with the Hopf algebra structure given by
$$\Delta(g) = g\otimes g, \Delta(X) = g \otimes X + X \otimes 1, \varepsilon(g) = 1, \varepsilon(X) = 0, S(g) = g^{-1}, S(X) = -g^{-1}X.$$
%
%
%
Then  $A$ has a left $H$-module structure, given by $g \rightharpoonup a = \sigma (a) \, \textrm{ and } \,   X  \rightharpoonup a =  \delta(a).$
It is easy to see that
	$$g \rightharpoonup (ab) = (g \rightharpoonup a)(g \rightharpoonup b) \text{ and } X \rightharpoonup (ab) = (g \rightharpoonup a)(X \rightharpoonup b) + (X \rightharpoonup a)b$$
for all $a,b \in A$. So $m_A$ and $u_A$ are $H$-module morphisms, that is, with the action defined above $A$ is a left $H$-module algebra.
	
Let $B=k[X]$ be the subalgebra of $H$ generated by $X$. In fact, $B$ is a left coideal subalgebra of $H$. The Ore extension $A[x;\sigma, \delta]$ is isomorphic to the Takeuchi smash product $A\#B$, which is a subalgebra of $A \# H$.
\end{ex}

\begin{ex}\label{a-class-twisted-tensor-product}
	A tensor product of graded algebras twisted by a bicharacter can also be viewed as a Takeuchi smash product.
	Let $A$ and $B$ be two $k$-algebras that are graded by abelian groups $G$ and $L$ respectively:
	$A = \bigoplus_{g \in G} A_g$ and $B = \bigoplus_{l \in L} B_l.$

	Let $t: G \times L \to k^{\times}$ be a bicharacter, that is,
	$$t(g + g', l) = t(g, l)t(g', l)\, \text{ and } \, t(g, l + l') = t(g, l)t(g, l')$$
	for any $g, g' \in G$ and $l, l' \in L$.
	
	With this data, the twisted tensor product $A \otimes^t B$ (see \cite[Definition/Construction 2.2]{BO}) is by definition the vector space $A \otimes B$, with multiplication given by
	$$(a \otimes b) (a' \otimes b') = t(g, l)\, aa' \otimes bb',$$
	where $a \in A$, $a' \in A_g$, $b \in B_l$ and $b' \in B$.
	
	In fact, $B$ is a $kL$-comodule algebra, and $A$ is a $kL$-module algebra via $l \rightharpoonup a := t(g, l)a$ for any $g \in G$, $l \in L$ and $a \in A_g$.
	The twisted tensor product $A \otimes^t B$ is the Takeuchi smash product $A\#B$ over the Hopf algebra $kL$.
\end{ex}

In general, any Takeuchi smash product is a twisted tensor product \cite{CSV}.
Let $A$ and $B$ be two algebras, and $\tau: B \otimes A \longrightarrow A \otimes B$ be a linear map.
 Then there is a multiplication map on the vector space $A\otimes B$ given by
$$ m_{A\otimes^{\tau}B} := (m_A \otimes m_B) \circ (\id_A \otimes \tau \otimes \id_B)$$
where $m_A$ and $m_B$ are the multiplication maps of $A$ and $B$ respectively. If, with this multiplication, $A\otimes B$ is an associative algebra with unit $1_A \otimes 1_B$, then it is called a  {\it twisted tensor product} determined by $\tau$ and is denoted by  $A\otimes^{\tau}B$.
If $A\#B$ is a Takeuchi smash product, then it is the twisted tensor product $A\otimes^{\tau}B$ given by $\tau: B \otimes A \to A \otimes B, \, b \otimes a \mapsto \sum_{(b)} (b_{-1} \rightharpoonup a) \otimes b$.

\subsection{Takeuchi smash product of modules} Let $_A\mathcal{M}$ (resp. $\mathcal{M}_A$) denote the left (resp. right) $A$-module category for an algebra $A$.
Let $A$ be a left $H$-module algebra and $B$ be a left $H$-comodule algebra. Recall that a left $B$-module $X$ with a left $H$-coaction $\rho$ is called a left $(H,B)$-Hopf module \cite[Definition 8.5.1]{Mo} if $\rho(bx)=\rho(b)\rho(x)$ for all $b \in B$ and $x \in X$. Let $^H_B\mathcal{M}$ denote the left $(H,B)$-Hopf module category. The $(H,B)$-Hopf module category $^H\mathcal{M}_B$ is defined analogously.

Note that $A\#B \to (A\#H) \otimes B, a\#b \mapsto \sum_{(b)}(a\#b_{-1}) \otimes b_0$ is an algebra map.

For any $M \in {_{A\#H}\mathcal{M}}$ and $X \in {_{B}\mathcal{M}}$, there is a natural left $A\#B$-module structure on $M \otimes X$, given by
\begin{equation} \label{left AxB-mod-on-tensor}
a\#b \rightharpoonup m\otimes x = \sum_{(b)}a(b_{-1} \rightharpoonup m) \otimes b_0x = \sum_{(b)} (a \# b_{-1})m \otimes b_0x
\end{equation}
for any $a \in A$, $b \in B$, $m \in M$ and $x \in X$. This module, denoted  by $M\#X$, is called the {\it Takeuchi smash product} of $_{A\#H}M$ and $_BX$, where the element $m \otimes x$ in $M\#X$ is denoted by $m\#x$.

For any $M \in \mathcal{M}_{A\#H}$ and $X \in \mathcal{M}_B$, there is also a right $A\#B$-module structure on $M \otimes X$  given by
\begin{equation} \label{right AxB-mod-on-tensor-2}
(m \otimes x) (a \# b) = \sum_{(b)} m(a\#b_{-1}) \otimes xb_0.
\end{equation}	
This module, denoted also by $M\#X$, is called the {\it Takeuchi smash product} of  $M_{A\#H}$ and $X_B$.

For any $N \in \mathcal{M}_A$ and $Y \in {^{H}\mathcal{M}_B}$, there is a natural right $A\#B$-module structure on $N \otimes Y$, given by
\begin{equation}\label{right AxB-mod-on-tensor}
n\otimes y \leftharpoonup a\#b = \sum_{(y)} n(y_{-1}\rightharpoonup a) \otimes y_0b
\end{equation}
for any $a \in A$, $b \in B$, $n \in N$ and $y \in Y$. This module, denoted  by $N\#Y$, is called the {\it Takeuchi smash product} of $N_A$ and $Y \in {^{H}\mathcal{M}_B}$, where the element $n \otimes y$ in $N\#Y$ is denoted by $n\#y$.

For any $N \in {_{A}\mathcal{M}}$ and $Y \in {_B^H\mathcal{M}}$, there is a left $A\#B$-module structure on $N \otimes Y$ which is given by
\begin{equation}
\label{left AxB-mod-on-tensor-2}(a \# b) (n \otimes y) = \sum_{(b)}( S^{-1}(b_{-1}y_{-1}) \rightharpoonup a)n \otimes b_0y_0.
 \end{equation}
This module, denoted also  by $N\#Y$, is called the {\it Takeuchi smash product} of $_AN$ and  $Y \in {_B^H\mathcal{M}}$.

\begin{rk} A right-sided version of Takeuchi smash product $B \# A$ is defined similarly for a right $H$-comodule algebra $B$ and a right $H$-module algebra $A$. There are also four types of Takeuchi smash product of modules similar to 
the module structures defined by \eqref{left AxB-mod-on-tensor}, \eqref{right AxB-mod-on-tensor-2}, \eqref{right AxB-mod-on-tensor} and \eqref{left AxB-mod-on-tensor-2}. The module structures given by  \eqref{right AxB-mod-on-tensor} and \eqref{left AxB-mod-on-tensor-2} seem less natural than the two given by  \eqref{left AxB-mod-on-tensor} and \eqref{right AxB-mod-on-tensor-2}, but they are dual to the two natural ones in the right-sided version in some sense (in particular when $H$ is finite-dimensional). 
\end{rk}

\subsection{Adjointness}
The following lemma is modified from the usual smash product case, the proof is routine.

\begin{lem} \label{Hom-Tensor-adj}
	Let $M \in {_{A\#H}\mathcal{M}}$, $X \in {}_B\mathcal{M}$ and $T \in {_{A\#B}\mathcal{M}}$.
	\begin{enumerate}
		\item There is a natural left $B$-module structure on $\Hom_A(M,T)$, given by
		\begin{equation} \label{B-mod-on-Hom}
			(b \rightharpoonup f)(m) = \sum\limits_{(b)}b_0f(S^{-1}b_{-1} \rightharpoonup m).
		\end{equation}
		\item The $\Hom$-Tensor adjoint isomorphism $\Hom(X, \Hom(M, T)) \stackrel{\cong}{\longrightarrow} \Hom(M \otimes X, T)$ restricts to a natural isomorphism
		$\Hom_B(X, \Hom_A(M,T)) \stackrel{\cong}{\longrightarrow} \Hom_{A\#B}(M\#X, T)$, that is, $\Hom_A(M, -): {_{A\#B}\mathcal{M}} \to {}_B\mathcal{M}$ and 
$M \# -: {}_B\mathcal{M} \to  {_{A\#B}\mathcal{M}}$ is a pair of adjoint functors.
	\end{enumerate}
\end{lem}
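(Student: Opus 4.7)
The plan is to establish part (1) by direct Sweedler-notation computations and then to deduce part (2) by showing the classical $k$-linear Hom-Tensor adjunction restricts bijectively to the subspaces in the statement.

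For part (1), three things must be checked for the formula \eqref{B-mod-on-Hom}: that $b \rightharpoonup f$ is $A$-linear, that $1_B \rightharpoonup f = f$, and that $(bb')\rightharpoonup f = b \rightharpoonup (b'\rightharpoonup f)$. Unitality is immediate from $\rho(1_B) = 1_H \otimes 1_B$ together with $S^{-1}(1_H) = 1_H$. Associativity follows from $\rho$ being an algebra map combined with the anti-multiplicativity of $S^{-1}$, after reorganizing the Sweedler sum of $\rho(bb')$ and using the associativity of the $B$-action on $T$ via $b \mapsto 1_A\#b$. The $A$-linearity of $b \rightharpoonup f$ is the delicate point; it uses $\Delta \circ S^{-1} = (S^{-1}\otimes S^{-1}) \circ \text{flip} \circ \Delta$, the $H$-module algebra axiom on $A$, the $A\#B$-module structure on $T$, and one application of the antipode axiom to cancel paired $h_{(1)}$ and $S^{-1}(h_{(2)})$ terms.

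For part (2), I would take the usual $k$-linear bijection $\phi(g)(m\otimes x) := g(x)(m)$ and verify: given $g \in \Hom_B(X, \Hom_A(M, T))$ the map $\phi(g)$ is $A\#B$-linear on $M\#X$, and conversely, given $h \in \Hom_{A\#B}(M\#X, T)$ the map $\psi(h)(x)(m) := h(m\#x)$ satisfies $\psi(h)(x) \in \Hom_A(M,T)$ and is $B$-linear. The $A$-linearity of $\psi(h)(x)$ follows by applying $h$ to $(a\#1_B)(m\#x) = am\#x$, and the $B$-linearity of $\psi(h)$ follows by applying $h$ to $(1_A\#b)(m\#x) = \sum_{(b)}(1\#b_{-1})m \otimes b_0 x$ and matching with \eqref{B-mod-on-Hom} via the antipode axiom. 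For the forward direction, expanding $\phi(g)\bigl((a\#b)(m\#x)\bigr)$ using \eqref{left AxB-mod-on-tensor} gives $\sum_{(b)} g(b_0 x)\bigl((a\#b_{-1})m\bigr) = \sum_{(b)} (b_0 \rightharpoonup g(x))\bigl((a\#b_{-1})m\bigr)$, and inserting \eqref{B-mod-on-Hom} followed by antipode collapse yields $(a\#b)\cdot g(x)(m)$, as required.

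The main obstacle is the bookkeeping of Sweedler sums containing $S^{-1}$: the $B$-module structure in \eqref{B-mod-on-Hom} is crafted precisely so that the $H$-coaction on $B$ and the $S^{-1}$-twist cancel, via the antipode axiom, exactly in the way needed to convert $A$-linearity into $A\#B$-linearity. Once the Sweedler expressions are organized into matching form, each step is a single instance of a Hopf algebra axiom, which is why the authors describe the verification as routine.
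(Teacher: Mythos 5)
Your proposal is correct and is precisely the routine verification that the paper omits (the paper only remarks that the lemma is ``modified from the usual smash product case, the proof is routine''), and every step you outline goes through as described. The one point to watch in writing it out is that the antipode cancellations that actually occur are of the forms $\sum h_{(2)}S^{-1}(h_{(1)})=\varepsilon(h)1_H$ and $\sum S^{-1}(h_{(2)})h_{(1)}=\varepsilon(h)1_H$ (the expression $\sum h_{(1)}S^{-1}(h_{(2)})$ is \emph{not} $\varepsilon(h)1_H$ in general), so the phrase ``cancel paired $h_{(1)}$ and $S^{-1}(h_{(2)})$ terms'' should be read with the factors in the correct order; with that reading the argument is complete.
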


Then, for any $M \in {_{A\#H}\mathcal{M}}$ and $T \in  {_{A\#B}\mathcal{M}}$, there exists a natural $B$-module structure on the Ext-groups $\Ext^n_A(M, T)$. In fact, the $H$-module structure on the Ext-groups can be induced by any resolution of $M$ in ${_{A\#H}\mathcal{M}}$ such that each term is projective $A$-module, or any resolution of $T$ in ${_{A\#B}\mathcal{M}}$ of which all terms are injective $A$-modules. And these two induced $B$-module structures on $\Ext^n_{A\#B}(M, T)$ coincide, as shown in the following lemma.

\begin{lem}\label{H-mod str on Ext-group}
	Let $M \in {_{A\#H}\mathcal{M}}$ and $T \in {_{A\#B}\mathcal{M}}$.
	Let $P_{\bullet} \to M \to 0$ and $P'_{\bullet} \to M \to 0$ be exact sequences in $_{A\#H}\mathcal{M}$, where all $P_n$ and $P'_n$ are projective $A$-modules,
	and let $0 \to T \to I^{\bullet}$ and $0 \to T \to I'^{\bullet}$ be exact sequences in $_{A\#B}\mathcal{M}$, where all $I^n$ and $I'^n$ are injective $A$-modules.
	Then, for any $n \in \mathbb{N}$, as $B$-modules,
	$$\mathrm{H}^n(\Hom_{A}(P_{\bullet}, T)) \cong \mathrm{H}^n(\Hom_{A}(M, I^{\bullet})) \cong \mathrm{H}^n(\Hom_{A}(P'_{\bullet}, T)) \cong \mathrm{H}^n(\Hom_{A}(M, I'^{\bullet})).$$
\end{lem}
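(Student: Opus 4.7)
The plan is to use a Cartan--Eilenberg-style double complex to compare all four groups at once. Form $C^{p,q} := \Hom_A(P_p, I^q)$. Since $P_p \in {}_{A\#H}\mathcal{M}$ and $I^q \in {}_{A\#B}\mathcal{M}$, Lemma \ref{Hom-Tensor-adj}(1) endows each $C^{p,q}$ with a natural left $B$-module structure via formula \eqref{B-mod-on-Hom}. The first task is to check that both differentials of $C^{\bullet,\bullet}$ are $B$-linear. The horizontal differential is postcomposition with the $A\#B$-linear maps $d^q : I^q \to I^{q+1}$, and the $B$-linearity of $d^q$ immediately pushes through the $B$-action formula. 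The vertical differential is precomposition with the $A\#H$-linear maps $d_{p+1} : P_{p+1} \to P_p$, and here the $H$-linearity of $d_{p+1}$ is exactly what is needed to intertwine the $S^{-1}b_{-1}$ term that appears in \eqref{B-mod-on-Hom}. Hence $\mathrm{Tot}(C^{\bullet,\bullet})$ is a complex of left $B$-modules.

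Next I would run the two standard spectral sequences of $\mathrm{Tot}(C^{\bullet,\bullet})$. Filtering by columns, the projectivity of each $P_p$ as an $A$-module makes $\Hom_A(P_p,-)$ exact on the $A$-resolution $T \hookrightarrow I^\bullet$, so the spectral sequence degenerates at $E_2$ and yields a $B$-module isomorphism $\HH^n(\mathrm{Tot}(C)) \cong \HH^n(\Hom_A(P_\bullet, T))$; the edge map is induced by postcomposition with the $A\#B$-linear augmentation $T \hookrightarrow I^0$, hence is $B$-linear. Symmetrically, filtering by rows and using that each $I^q$ is injective as an $A$-module and that $P_\bullet \to M$ is exact as a sequence of $A$-modules, one gets $\HH^n(\mathrm{Tot}(C)) \cong \HH^n(\Hom_A(M, I^\bullet))$ as $B$-modules, via the edge map induced by the $A\#H$-linear augmentation $P_0 \twoheadrightarrow M$.

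Running this construction for each of the four pairs $(P_\bullet, I^\bullet)$, $(P_\bullet, I'^\bullet)$, $(P'_\bullet, I^\bullet)$, $(P'_\bullet, I'^\bullet)$ and chaining the resulting $B$-module isomorphisms produces the full chain of isomorphisms stated in the lemma. The only real obstacle is the careful bookkeeping required to guarantee $B$-linearity at every step---both of the horizontal and vertical differentials and of the two edge maps---but each such check reduces to a direct application of \eqref{B-mod-on-Hom} combined with the $A\#H$-linearity (respectively $A\#B$-linearity) of the structural maps supplied by the hypotheses. No machinery beyond the classical Cartan--Eilenberg argument is needed.
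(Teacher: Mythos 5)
Your proposal is correct and follows essentially the same route as the paper: the paper's proof consists precisely of forming the double complex $\Hom_A(P_{\bullet}, I^{\bullet})$ of $B$-modules and comparing $\mathrm{H}^*(\Hom_A(P_{\bullet},T))$ with $\mathrm{H}^*(\Hom_A(M,I^{\bullet}))$ through its total complex. Your write-up merely supplies the details the paper leaves implicit (the $B$-linearity checks via \eqref{B-mod-on-Hom} and the degeneration of the two filtration spectral sequences), so no further comment is needed.
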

\begin{proof}
	Consider the double complex $\Hom_A(P_{\bullet}, I^{\bullet})$ of $B$-modules. Then
	$$\text{H}^*(\Hom_{A}(P_{\bullet}, T)) \cong \text{H}^*(\Hom_{A}(M, I^{\bullet}))$$
	as $B$-modules.
\end{proof}

For any $M_{A\#H}$, $X_B$ and $T_{A\#B}$, there is  right module version of Lemma \ref{Hom-Tensor-adj} and Lemma \ref{H-mod str on Ext-group}.

There is also a version of Lemma \ref{Hom-Tensor-adj} for any $N \in {\mathcal{M}_A}$, $Y \in {^H\mathcal{M}_{B}}$ and $T \in {\mathcal{M}_{A\#B}}$, which can be viewed as a dual version.
\begin{lem} \label{right-Hom-Tensor-adj}
	Let $N \in {\mathcal{M}_A}$, $Y \in {^H\mathcal{M}_{B}}$ and $T \in {\mathcal{M}_{A\#B}}$.
	\begin{enumerate}
		\item There is a right $A$-module structure on $\Hom_{B^{op}}(Y,T)$ given by
		$$(f a)(y) = \sum\limits_{(y)}f(y_0) (S^{-1}y_{-1} \rightharpoonup a).$$
		\item The $\Hom$-Tensor adjoint isomorphism $\Hom(N, \Hom(Y, T)) \stackrel{\cong}{\longrightarrow} \Hom(N \otimes Y, T)$ restricts to an isomorphism
		$\Hom_{A^{op}}(N, \Hom_{B^{op}}(Y,T)) \stackrel{\cong}{\longrightarrow} \Hom_{(A\#B)^{op}}(N\#Y, T).$
	\end{enumerate}
\end{lem}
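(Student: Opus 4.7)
The plan is to mirror the proof of Lemma \ref{Hom-Tensor-adj}, exchanging the $H$-action on $M$ for the $H$-coaction on $Y$; the antipode $S^{-1}$ appearing in the formula $(fa)(y)=\sum f(y_0)(S^{-1}y_{-1}\rightharpoonup a)$ is precisely what converts the coaction on $Y$ into an $H$-action on $A$. Both parts reduce to Sweedler-notation calculations built out of four standard ingredients: the module-algebra axiom $h\rightharpoonup(aa')=\sum(h_1\rightharpoonup a)(h_2\rightharpoonup a')$ for $A$; the Hopf-module axiom $\rho(yb)=\rho(y)\rho(b)$ for $Y$; the antipode identity $\sum S^{-1}(x_2)x_1=\varepsilon(x)1=\sum x_2 S^{-1}(x_1)$; and coassociativity of $\rho$.

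For part (1) I would check the three axioms of a right $A$-module structure on $\Hom_{B^{op}}(Y,T)$. To see $fa\in\Hom_{B^{op}}(Y,T)$, i.e.\ $(fa)(yb)=(fa)(y)b$, expand via $\rho(yb)=\rho(y)\rho(b)$ and $S^{-1}(hh')=S^{-1}(h')S^{-1}(h)$, then use the smash commutation $(1\# b_0)((S^{-1}b_{-1}\rightharpoonup a')\# 1)=\sum((b_0)_{-1}S^{-1}(b_{-1}))\rightharpoonup a'\#(b_0)_0$, and finally the identity $\sum(b_0)_{-1}S^{-1}(b_{-1})\otimes(b_0)_0=1\otimes b$ (coassociativity plus $\sum x_2 S^{-1}(x_1)=\varepsilon(x)1$), to collapse the result to $(fa)(y)b$. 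The unit axiom $(f\cdot 1)(y)=f(y)$ follows from $\varepsilon\circ S^{-1}=\varepsilon$ and $\sum\varepsilon(y_{-1})y_0=y$. For associativity $(fa_1)a_2=f(a_1a_2)$, use the module-algebra axiom on $A$, the coalgebra anti-homomorphism property $\Delta\circ S^{-1}=(S^{-1}\otimes S^{-1})\circ\tau\circ\Delta$, and coassociativity of $\rho$ applied to $\rho^2(y)$ to match the two sides.

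For part (2), the underlying $k$-linear adjunction $\Phi(\phi)(n\otimes y):=\phi(n)(y)$ is a standard bijection, so it suffices to verify that it restricts correctly. Given $\phi\in\Hom_{A^{op}}(N,\Hom_{B^{op}}(Y,T))$, the $1\# B$-part of the right $(A\# B)$-linearity of $\Phi(\phi)$ is immediate from each $\phi(n)$ being right $B$-linear, while the $A\# 1$-part is obtained by unwinding $\Phi(\phi)\bigl(\sum n(y_{-1}\rightharpoonup a)\otimes y_0\bigr)$, applying right $A$-linearity of $\phi$ (with the action defined in (1)), and then collapsing a double coaction on $y$ via $\sum S^{-1}((y_{-1})_2)(y_{-1})_1=\varepsilon(y_{-1})1$; what remains is $\phi(n)(y)\cdot a$. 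The reverse implication runs the same identities in the opposite order, proving that $\Phi$ restricts to a bijection onto $\Hom_{(A\# B)^{op}}(N\# Y,T)$.

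The only real obstacle is careful bookkeeping with the two readings of coassociativity for $\rho^2(y)$ and $\rho^2(b)$, namely $y_{-2}\otimes y_{-1}\otimes y_0=y_{-1}\otimes(y_0)_{-1}\otimes(y_0)_0=(y_{-1})_1\otimes(y_{-1})_2\otimes y_0$, and applying the antipode identity at exactly the right reading; no genuinely new Hopf-algebraic ingredient beyond those in Lemma \ref{Hom-Tensor-adj} is needed.
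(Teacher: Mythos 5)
Your proposal is correct, and it is exactly the routine Sweedler-notation verification that the paper leaves to the reader (the paper states this lemma as the ``dual version'' of Lemma \ref{Hom-Tensor-adj} and omits the proof as routine). The identities you invoke --- the module-algebra axiom, the Hopf-module axiom $\rho(yb)=\rho(y)\rho(b)$, the anti-(co)homomorphism properties of $S^{-1}$, and the collapse $\sum S^{-1}((y_{-1})_2)(y_{-1})_1=\varepsilon(y_{-1})1$ --- are applied at the right places, so the argument goes through as written.
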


\begin{lem} \label{relative-resolution}
	Let $X \in {_B^H\mathcal{M}}$ (resp. $^H\mathcal{M}_B$).
	\begin{enumerate}
		\item \cite[Proposition 4.1]{CG} $X$ viewed as left (resp. right) $B$-module is finitely generated if and only if there is a finite-dimensional left $H$-comodule $V$ and a surjective morphism $\pi: B \otimes V  \twoheadrightarrow X$ in $_B^H \mathcal{M}$ (resp.  $\pi: V \otimes B \twoheadrightarrow X$ in $^H\mathcal{M}_B$).
		\item Suppose  $X$ viewed as left (resp. right) $B$-module is pseudo-coherent. Then there is an exact sequence in $_B^H\mathcal{M}$ (resp. $^H\mathcal{M}_B$), where all the $V_n's$ are finite-dimensional left $H$-comodules,
		$$ \cdots \to B \otimes V_n \to \cdots \to B \otimes V_0 \to X \to 0 \,\, (\textrm{resp.}  \cdots \to V_n \otimes B \to \cdots \to V_0 \otimes B \to X \to 0).$$
	\end{enumerate}
\end{lem}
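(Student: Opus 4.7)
The plan is to build the resolution one step at a time by iterated application of part (1), the key technical input being that pseudo-coherence over $B$ is inherited by the syzygies arising at each step inside ${_B^H\mathcal{M}}$.

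Since $X$ is pseudo-coherent as a $B$-module it is in particular finitely generated over $B$, so part (1) produces a finite-dimensional left $H$-comodule $V_0$ together with a surjection $\pi_0 \colon B \otimes V_0 \twoheadrightarrow X$ in ${_B^H\mathcal{M}}$. The category ${_B^H\mathcal{M}}$ is abelian, with kernels computed in ${_B\mathcal{M}}$ and the $H$-coaction descending canonically, so $K_0 := \ker \pi_0$ is again an object of ${_B^H\mathcal{M}}$. The crucial point is that $K_0$ remains pseudo-coherent as a $B$-module. Picking any resolution $\cdots \to P_1 \to P_0 \to X \to 0$ by finitely generated projective $B$-modules and comparing with $0 \to K_0 \to B \otimes V_0 \to X \to 0$, Schanuel's lemma yields
\[
K_0 \oplus P_0 \;\cong\; \ker(P_0 \to X) \oplus (B \otimes V_0).
\]
The right-hand side is pseudo-coherent (the first summand inherits the shifted resolution $\cdots \to P_2 \to P_1 \to \ker(P_0 \to X) \to 0$, and the second is finitely generated free over $B$), hence so is the direct summand $K_0$. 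In particular $K_0$ is finitely generated as a $B$-module, so part (1) applies to it and produces $V_1$ finite-dimensional together with a surjection $B \otimes V_1 \twoheadrightarrow K_0$; composing with $K_0 \hookrightarrow B \otimes V_0$ gives the next term of the resolution, and iteration produces the whole sequence. The right-module case follows by the symmetric argument.

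The main obstacle is precisely the Schanuel step: part (1) only guarantees the existence of a covering $B \otimes V \twoheadrightarrow X$ inside ${_B^H\mathcal{M}}$, which need not be compatible with any preexisting $B$-projective resolution of $X$. One must therefore argue abstractly that pseudo-coherence is preserved under taking the kernel of any surjection from a finitely generated projective, so that the induction can proceed without having to control the interaction between the $H$-coaction and the chosen $B$-resolution.
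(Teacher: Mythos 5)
Your proof of part (2) is correct and takes essentially the same route as the paper's: apply part (1) to get a surjection $B\otimes V_0\twoheadrightarrow X$ in ${_B^H\mathcal{M}}$, use Schanuel's lemma (comparing with a finitely generated $B$-projective resolution of $X$) to see that the kernel is again pseudo-coherent over $B$, and iterate. You leave part (1) unproved, but since the statement itself attributes it to \cite[Proposition 4.1]{CG} (the paper's own one-line argument just invokes the Finiteness Theorem for comodules to find a finite-dimensional subcomodule containing a generating set), this is not a gap.
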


\begin{proof}
	(1) Let $\{x_1,\dots,x_m\}$ be a set generators of $_BX$.  By the Finiteness Theorem \cite[5.1.1]{Mo}, there exists a finite-dimensional  left $H$-comodule $V \subseteq X$ such that $\{x_1,\dots,x_m\} \subseteq V$. Hence a surjective morphism $\pi: B \otimes V \twoheadrightarrow X$ in $_B^H\mathcal{M}$ exists.
	
	(2) By (1), there exists a finite-dimensional  left $H$-comodule $V_0$ with a surjective morphism $\pi : B \otimes V_0 \twoheadrightarrow X$ in $_B^H\mathcal{M}$. By Schanuel's lemma, the kernel of $\pi$ is also a pseudo-coherent $B$-module. Then the conclusion follows by induction.
\end{proof}

\subsection{Comodule structure on the B-morphism space of (H, B)-Hopf modules} The material in this subsection is essentially modified from \cite{Ul}, especially \cite[Lemma 2.2]{Ul}.

For any $X, Y \in {_B^{H}\mathcal{M}}$, we try to endow $\Hom_B(X,Y)$ with a left $H$-comodule structure, say, when $_BX$ is finitely generated. So, firstly we have to define a map $\Hom_B(X, Y) \to H \otimes \Hom_B(X, Y).$
We use the natural embedding $$H \otimes \Hom_B(X,Y) \subseteq \Hom_B(X, H \otimes Y), \, h \otimes f \mapsto \big(x \mapsto  h \otimes f(x)\big).$$

Let $\rho=\rho_Y: \Hom_B(X, Y) \to \Hom_B(X, H \otimes .Y)$ be the  morphism given by
\begin{equation} \label{H-comod-on-HOM}
	\rho(f)(x) = \sum_{(x),(f(x_0))} (Sx_{-1}) f(x_0)_{-1} \otimes f(x_0)_0
\end{equation}
where $f \in \Hom_B(X, Y)$ and $x \in X$.
Let
$$\HOM_B(X,Y) := \{ f \in \Hom_B(X,Y) \mid \rho(f) \in H \otimes \Hom_B(X,Y) \subseteq \Hom_B(X, H \otimes Y) \}.$$
Then, $\HOM_B(X,Y)$ is a left $H$-comodule. A proof of this is added in the following for the convenience.

\begin{lem} \label{H-comod-str-on-Hom}
Let $X, Y \in {}_B^H\mathcal{M}$.
	\begin{enumerate}
		\item $\HOM_B(X, Y)$ has a left $H$-comodule structure as given by \eqref{H-comod-on-HOM}.
		\item If $_BX$ is finitely generated, then $\Hom_B(X, Y) = \HOM_B(X, Y)$, and so it is a left $H$-comodule (see also \cite[Proposition 4.2]{CG}).
	\end{enumerate}
\end{lem}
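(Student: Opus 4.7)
My plan is to handle the two claims separately: (1) by direct Sweedler-style computation, and (2) by a d\'evissage to the free case $X = B\otimes V$ using Lemma \ref{relative-resolution}. For (1), the first step is to verify that for each $f\in\Hom_B(X,Y)$ the map $\rho(f)$ defined by \eqref{H-comod-on-HOM} is $B$-linear, so that $\rho$ lands in $\Hom_B(X, H\otimes Y)$, where $H\otimes Y$ carries the $B$-module structure inherited from $Y$. Substituting $bx$ and expanding via the $(H,B)$-Hopf module axiom $(bx)_{-1}\otimes(bx)_0 = b_{-1}x_{-1}\otimes b_0x_0$ on $X$, then $B$-linearity of $f$, then the $(H,B)$-Hopf module axiom on $Y$ applied to $b_0 f(x_0)$, then coassociativity of the $H$-coaction on $B$, and finally the antipode identity $\sum S(c_1)c_2 = \varepsilon(c)1$, collapses the stray $b$-factors and yields $\rho(f)(bx) = b\cdot\rho(f)(x)$. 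By construction $\HOM_B(X,Y)$ is the subspace on which $\rho$ factors through $H\otimes\Hom_B(X,Y)$; the coassociativity identity $(\Delta\otimes\id)\circ\rho = (\id\otimes\rho)\circ\rho$, checked inside $\Hom_B(X, H\otimes H\otimes Y)$ using $\Delta\circ S = (S\otimes S)\circ\tau\circ\Delta$ together with coassociativity of the $H$-coactions on $X$ and on $Y$, forces (by a standard basis argument in the outer $H$-tensorand) the components of $\rho(f)$ to themselves lie in $\HOM_B(X,Y)$, and counitality $(\varepsilon\otimes\id)\circ\rho = \id$ is then immediate from $\varepsilon\circ S = \varepsilon$ and the counit axioms of the coactions.

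For (2), a useful preliminary observation is that $\rho_Y$ is natural in its first argument: for any morphism $\varphi: X'\to X$ in ${_B^H\mathcal{M}}$, inspection of \eqref{H-comod-on-HOM} immediately yields $\rho_Y^{X'}(f\circ\varphi) = \varphi^{*}\bigl(\rho_Y^X(f)\bigr)$ for every $f\in\Hom_B(X,Y)$. With this in hand I dispose of the free case $X = B\otimes V$ where $V$ is a finite-dimensional left $H$-comodule: the adjunction $\Hom_B(B\otimes V, -) \cong \Hom(V, -)$ combined with the canonical isomorphism $H\otimes\Hom(V, Y) \cong \Hom(V, H\otimes Y)$, available because $\dim V<\infty$, shows that the inclusion $H\otimes\Hom_B(B\otimes V, Y)\subseteq\Hom_B(B\otimes V, H\otimes Y)$ is in fact an equality. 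Hence $\HOM_B(B\otimes V, Y) = \Hom_B(B\otimes V, Y)$ without any further constraint.

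For general finitely generated ${}_BX$, Lemma \ref{relative-resolution}(1) provides a short exact sequence $0\to K\to B\otimes V \to X\to 0$ in ${_B^H\mathcal{M}}$ with $V$ finite-dimensional; write $\pi$ for the surjection and $\iota$ for the inclusion. Given $f\in\Hom_B(X,Y)$, set $F := \rho_Y^X(f)\in\Hom_B(X, H\otimes Y)$. By naturality and the free case, $F\circ\pi = \rho_Y^{B\otimes V}(f\circ\pi)$ lies in $H\otimes\Hom_B(B\otimes V, Y)$, say $F\circ\pi = \sum_i h_i\otimes g_i$. Naturality together with $f\circ\pi\circ\iota = 0$ gives $\sum_i h_i\otimes(g_i\circ\iota) = 0$ in $H\otimes\Hom_B(K,Y)$; since the left-exact sequence $0\to\Hom_B(X,Y)\to\Hom_B(B\otimes V, Y)\to\Hom_B(K,Y)$ remains left-exact after tensoring over $k$ with $H$, the element $\sum_i h_i\otimes g_i$ lies in the subspace $H\otimes\Hom_B(X,Y)\subseteq H\otimes\Hom_B(B\otimes V, Y)$. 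Pulling back along the injection $\pi^{*}: \Hom_B(X, H\otimes Y)\hookrightarrow\Hom_B(B\otimes V, H\otimes Y)$ then forces $F\in H\otimes\Hom_B(X,Y)$, i.e.\ $f\in\HOM_B(X,Y)$.

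The main obstacle I anticipate is the initial $B$-linearity check for $\rho(f)$ in part (1): the cancellation relies on orchestrating the antipode with the Hopf module axioms on both $X$ and $Y$ in the right order, and a Sweedler-index misalignment would block the antipode collapse. Once that identity is secured, the coalgebra axioms and the d\'evissage in part (2) are largely formal.
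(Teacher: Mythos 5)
Your proposal is correct and follows essentially the same route as the paper: part (1) is the same coassociativity computation $(\Delta\otimes\id)\rho=( \id\otimes\rho)\rho$ with a linear-independence argument in the outer $H$-tensorand to see that the components of $\rho(f)$ stay in $\HOM_B(X,Y)$, and part (2) is the same d\'evissage through a surjection $B\otimes V\twoheadrightarrow X$ from Lemma \ref{relative-resolution}, using that $H\otimes\Hom_B(B\otimes V,Y)=\Hom_B(B\otimes V,H\otimes Y)$ for finite-dimensional $V$ and that $\pi^*$ is injective. Your explicit check that $\rho(f)$ is $B$-linear (which the paper leaves implicit in the definition of $\rho$) is a correct and harmless addition; the Sweedler bookkeeping you flag does collapse as you describe via $S(b_{-2})b_{-1}\otimes b_0=1\otimes b$.
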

\begin{proof} (1)
	For any $f \in \HOM_B(X,Y)$, suppose $\rho(f) = \sum_i h^i \otimes f^i \in H \otimes \Hom_B(X,Y)$, where $\{h^i\} \subseteq H$ and $\{f^i\} \subseteq \Hom_B(X,Y)$  are $k$-linear independent respectively.
	Then $$\rho(f)(x)=\sum_i h^i \otimes f^i(x) = \sum_{(x),(f(x_0))} (Sx_{-1}) f(x_0)_{-1} \otimes f(x_0)_0, \, \textrm{ and }$$
	$$\sum_{(x)} Sx_{-1} \otimes \rho(f)(x_0) = \sum_{i, (x)} Sx_{-1} \otimes h^i \otimes f^i(x_0)= \sum_{ (x), (f(x_0)} Sx_{-2} \otimes (Sx_{-1})f(x_0)_{-1} \otimes f(x_0)_0.$$
	By acting by $\rho$ on the last component of both sides in the last equality, and switching the first two components obtained, then
	\begin{equation}\label{equality-for-next}
		\sum_i \sum_{(x), (f^i(x_0))} h^i \otimes (Sx_{-1})f^i(x_0)_{-1} \otimes f^i(x_0)_0
		=\sum_{(x), (f(x_0))} (Sx_{-1})f(x_0)_{-2} \otimes (Sx_{-2})f(x_0)_{-1} \otimes f(x_0)_0.
	\end{equation}
	Consider the images of $\sum_{i, (h^i)} h^i_1 \otimes h^i_2 \otimes f^i \in H \otimes H \otimes \Hom_B(X,Y)$ and $\sum_i h^i \otimes \rho(f^i) \in H \otimes \Hom_B(X, H \otimes Y)$ in $\Hom_B(X, H \otimes H \otimes Y)$. By acting on $x \in X$, then, on one hand,
$$(\Delta \otimes 1)\rho(f)(x) = (\Delta \otimes 1)\sum_i(h^i \otimes f^i(x))
= \sum_i\sum_{(h^i)} h^i_1 \otimes h^i_2 \otimes f^i(x).$$
On the other hand,
\begin{align*}
  &(\Delta \otimes 1)\rho(f)(x)=\sum_{(x), (f(x_0))} (Sx_{-1})f(x_0)_{-2} \otimes (Sx_{-2})f(x_0)_{-1} \otimes f(x_0)_0 \\
		\stackrel{\eqref{equality-for-next}}{=} &\sum_i \sum_{(x), (f^i(x_0))} h^i \otimes (Sx_{-1})f^i(x_0)_{-1} \otimes f^i(x_0)_0 = \sum_i h^i \otimes \rho(f^i)(x) \\
=& \sum_i (1_H \otimes \rho) (h^i \otimes f^i)(x)\\
		= &(1_H \otimes \rho) \rho(f)(x).
	\end{align*}
	Now it is easy to see that $f^i \in \HOM_B(X,Y)$ for all $i$, and $\HOM_B(X,Y)$ is a left $H$-comodule.
	
	(2)
	By Lemma \ref{relative-resolution}, there exists a finite-dimensional left $H$-subcomodule $V$ of $X$, and a surjective morphism $\pi: B \otimes V \to X$ in ${_B^{H}\mathcal{M}}$. So, $\Hom_B(X,Y) \stackrel{\pi^*}{\longrightarrow} \Hom_B(B \otimes V, Y)$ is injective,
	and the following diagram is commutative.
	$$\xymatrix{
		\Hom_B(X, Y) \ar[d]^{\rho_X} \ar@{>->}[r]^{\pi^*} & \Hom_B(B \otimes V, Y) \ar[d]^{\rho_{B \otimes V}} \\
		\Hom_B(X, H \otimes Y) \ar@{>->}[r]^{\pi^*} & \Hom_B(B \otimes V, H \otimes Y)\\
		H \otimes \Hom_B(X,Y) \ar[r]^{1_H \otimes \pi^*} \ar@{>->}[u] & H \otimes \Hom_B(B \otimes V, Y)  \ar[u]^{=}
	}
	$$
	
	Now for any $f \in \Hom_B(X,Y)$, suppose $\rho_{B \otimes V}(f \pi) = \sum h^i \otimes g^i$, where $\{h^i\} \in H$ and $\{g^i\} \in \Hom_B(B \otimes V,Y)$
	are $k$-linear independent respectively.
	Then, for any $x' \in \ker \pi$, $$\sum h^i \otimes g^i(x') = \rho_X(f)(\pi(x')) = 0.$$ It follows that $g^i(x') = 0$ for any $i$, and so $g^i = f^i\pi$ for some $f^i \in \Hom_B(X,Y)$.
	Since $\Hom_B(X,Y) \stackrel{\pi^*}{\longrightarrow} \Hom_B(B \otimes V, Y)$ is injective, $\rho_X(f)=\sum h^i \otimes f^i \in H \otimes\Hom_B(X,Y)$ and $\Hom_B(X,Y) = \HOM_B(X,Y)$.
\end{proof}

If fact, $\HOM_B(-,-):  ({_B^{H}\mathcal{M}})^{op} \times {_B^{H}\mathcal{M}}  \to  {_{H}\mathcal{M}}$ is  a functor.
For any $X, X' \in {_B^{H}\mathcal{M}}$ with $_BX$ pseudo-coherent, by Lemma \ref{H-comod-str-on-Hom}, there is a natural $H$-comodule structure on $\Ext^*_B(X,X')$. The $H$-comodule structure on the Ext-groups is independent of the choices of the resolutions of $X$ in ${_B^{H}\mathcal{M}}$ such that each term is finitely generated projective as $B$-module and the resolutions of $X'$ in ${_B^{H}\mathcal{M}}$ of which all terms are injective as $B$-modules.

\begin{lem}\label{H-comod str on Ext-group}
	Let $X, X' \in {_B^{H}\mathcal{M}}$. Suppose that $P_{\bullet} \to X \to 0$, $P'_{\bullet} \to X \to 0$, $0 \to X' \to I^{\bullet}$ and $0 \to X' \to I'^{\bullet}$ are exact sequences
	in $_B^H\mathcal{M}$, where all $P_n$ and $P'_n$ are finitely generated projective as $B$-module, and all $I^n$ and $I'^n$ are injective as $B$-module. Then, for any $n \in \mathbb{N}$, as $H$-comodules,
	$$\mathrm{H}^n(\Hom_{B}(P_{\bullet}, X')) \cong \mathrm{H}^n(\Hom_{B}(X, I^{\bullet})) \cong \mathrm{H}^n(\Hom_{B}(P'_{\bullet}, X')) \cong \mathrm{H}^n(\Hom_{B}(X, I'^{\bullet})).$$
\end{lem}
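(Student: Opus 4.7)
The plan is to imitate the proof of Lemma \ref{H-mod str on Ext-group}, replacing modules by comodules throughout. Since each $P_n$ and $P'_n$ is finitely generated as a $B$-module, and $X$ itself is finitely generated (by Lemma \ref{relative-resolution}(1), using that any pseudo-coherent $B$-module is finitely generated as a quotient of the $0$-th term of its resolution), Lemma \ref{H-comod-str-on-Hom}(2) ensures that $\Hom_B(P_n, Z) = \HOM_B(P_n, Z)$ and $\Hom_B(X, Z) = \HOM_B(X, Z)$ for every $Z \in {_B^H\mathcal{M}}$. Consequently, all four cochain complexes appearing in the statement live a priori in the abelian category of left $H$-comodules, with coaction specified by \eqref{H-comod-on-HOM}.

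The technical heart of the proof is to verify that $\HOM_B(-,-)$ is a bifunctor into left $H$-comodules: any morphism $g$ in ${_B^H\mathcal{M}}$ induces an $H$-colinear map by pre- or post-composition. This follows from \eqref{H-comod-on-HOM} together with the $H$-colinearity of $g$, by a computation in the same spirit as the one carried out in Lemma \ref{H-comod-str-on-Hom}(1). Granted this, the differentials of the double complex $\Hom_B(P_\bullet, I^\bullet)$ (induced from those of $P_\bullet$ and $I^\bullet$, which are morphisms in $_B^H\mathcal{M}$) are $H$-colinear, so the bicomplex lies in the category of $H$-comodules.

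Finally, I would compute the cohomology of the total complex of $\Hom_B(P_\bullet, I^\bullet)$ via its two spectral sequences, entirely inside the abelian category of left $H$-comodules. Since each $I^q$ is $B$-injective, one spectral sequence collapses to $\HH^n(\Hom_B(X, I^\bullet))$; since each $P_p$ is $B$-projective, the other collapses to $\HH^n(\Hom_B(P_\bullet, X'))$. This supplies the first of the desired $H$-comodule isomorphisms; running the same argument on the double complexes $\Hom_B(P_\bullet, I'^\bullet)$ and $\Hom_B(P'_\bullet, I^\bullet)$ yields the remaining two. The only part requiring genuine verification is the $H$-colinearity of the functorial maps in the second paragraph; everything else is the standard two-spectral-sequence collapse, exactly as in the proof of Lemma \ref{H-mod str on Ext-group}.
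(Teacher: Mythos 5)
Your proposal is correct and follows essentially the same route as the paper, which simply forms the double complex $\Hom_B(P_\bullet, I^\bullet)$ in the category of left $H$-comodules and compares the two collapsing spectral sequences, exactly as in the module case of Lemma \ref{H-mod str on Ext-group}. Your added care in invoking Lemma \ref{H-comod-str-on-Hom}(2) (noting that $X$, as a quotient of the finitely generated $P_0$, is itself finitely generated over $B$) and in flagging the $H$-colinearity of the induced maps is exactly the content the paper leaves implicit in its one-line proof.
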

\begin{proof}
	Similar to the proof of Lemma \ref{H-mod str on Ext-group}.
\end{proof}

\section{Some homological properties of Takeuchi smash products}

In this section, we consider some (homological) properties of Takeuchi smash products of modules, especially, the decompositions of the Tor-groups and Ext-groups of Takeuchi smash products of modules.





Let $A$ be a left $H$-module algebra and $B$ be a left $H$-comodule algebra.
With the natural module structure, $_A(A\#B)$ and $(A\#B)_B$ are obviously free modules.

\begin{lem}\label{proj-Takeuchi-smash}
 Both $(A\#B)_A$ and ${}_B(A\#B)$ are free modules.
\end{lem}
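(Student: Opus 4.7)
The plan is to construct a $(B,A)$-bimodule isomorphism $B\otimes A \cong A\#B$, which immediately gives both claimed freeness statements, because $B \otimes A$ is obviously free as a left $B$-module (basis $\{1 \otimes a_i\}$ for any $k$-basis $\{a_i\}$ of $A$) and as a right $A$-module (basis $\{b_j \otimes 1\}$ for any $k$-basis $\{b_j\}$ of $B$).

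First I would define
$$\tau \colon B\otimes A \longrightarrow A\#B, \qquad \tau(b \otimes a) \;=\; (1\# b)(a\# 1) \;=\; \sum_{(b)} (b_{-1}\rightharpoonup a)\#b_0,$$
and view $A\#B$ as a $(B,A)$-bimodule via the canonical algebra injections $b \mapsto 1\# b$ and $a \mapsto a\#1$, while $B \otimes A$ carries the outer bimodule structure $b'(b \otimes a)a' = b'b \otimes aa'$. Then $\tau$ is bimodule-linear essentially by construction, since
$$\tau(b'b \otimes aa') \;=\; (1\# b'b)(aa'\# 1) \;=\; (1\# b')\,\tau(b\otimes a)\,(a'\# 1).$$

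For bijectivity, I would exhibit a two-sided inverse using the bijective antipode $S$ of $H$:
$$\sigma \colon A\#B \longrightarrow B \otimes A, \qquad \sigma(a \# b) \;=\; \sum_{(b)} b_0 \otimes \big(S^{-1}(b_{-1}) \rightharpoonup a\big).$$
Checking $\tau\sigma = \id$ and $\sigma\tau = \id$ reduces to the standard identities
$$\sum_{(h)} h_2\,S^{-1}(h_1) \;=\; \varepsilon(h)\,1 \;=\; \sum_{(h)} S^{-1}(h_2)\,h_1,$$
which hold because $S$ is bijective, together with coassociativity and the counit axiom of the left $H$-coaction $\rho$ on $B$. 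The only real bookkeeping hazard occurs in $\sigma\tau$: one applies $\rho$ twice (once in $\tau$, once in $\sigma$) and must carefully re-label the iterated Sweedler indices via coassociativity before the antipode identity can be invoked. Apart from this mild subtlety, the verifications are mechanical.
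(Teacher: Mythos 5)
Your proof is correct and takes essentially the same route as the paper: the map $\sigma$ you exhibit as the inverse is exactly the bimodule isomorphism $\varphi(a\#b)=\sum_{(b)} b_0\otimes(S^{-1}b_{-1}\rightharpoonup a)$ that the paper uses, and your verification via the antipode identities and coassociativity is the routine check the paper leaves implicit.
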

\begin{proof}
	The conclusion follows from the $B$-$A$-bimodule isomorphism $\varphi: {}_B(A\#B)_A \to {}_B B \otimes A_A$ defined by $\varphi(a \# b) = \sum_{(b)} b_0 \otimes (S^{-1}b_{-1} \rightharpoonup a)$.
\end{proof}

\begin{prop} \label{Ext-spec-seq-of-smash-prod-mod}
	For any $M \in {_{A\#H}\mathcal{M}}$, $X \in {_{B}\mathcal{M}}$ and $T \in {_{A\#B}\mathcal{M}}$,
	there is a convergent spectral sequence
	$$\Ext_B^p(X, \Ext_A^q(M,T)) \Rightarrow \Ext^{p+q}_{A\#B}(M\#X, T).$$
\end{prop}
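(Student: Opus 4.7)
The plan is to realize the spectral sequence as a Grothendieck spectral sequence for a composition of functors. By Lemma \ref{Hom-Tensor-adj}(2), there is a natural isomorphism
$$\Hom_{A\#B}(M\#X, T) \cong \Hom_B(X, \Hom_A(M, T)),$$
functorial in $T \in {_{A\#B}\mathcal{M}}$, where $\Hom_A(M, T)$ carries the $B$-action of \eqref{B-mod-on-Hom}. Thus $\Hom_{A\#B}(M\#X, -)$ factors as $\Hom_B(X, -) \circ \Hom_A(M, -)$ through ${_B\mathcal{M}}$; once Grothendieck's theorem is applicable, its right derived functors will produce $\Ext^{p+q}_{A\#B}(M\#X, T)$ as the abutment, which is exactly what is wanted.

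Next I would verify the key hypothesis: that $\Hom_A(M, -): {_{A\#B}\mathcal{M}} \to {_B\mathcal{M}}$ sends injectives to $\Hom_B(X, -)$-acyclic objects. In fact, it preserves injectives, because by Lemma \ref{Hom-Tensor-adj}(2) it is right adjoint to $M\#-: {_B\mathcal{M}} \to {_{A\#B}\mathcal{M}}$, whose underlying $k$-vector-space operation is $M \otimes_k -$. Since $k$ is a field this tensor product is exact, so the right adjoint preserves injectives.

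Finally I would identify $R^q\Hom_A(M, -)(T)$ — the derived functor computed in ${_{A\#B}\mathcal{M}}$ — with $\Ext^q_A(M, T)$ carrying its canonical $B$-module structure. Choose an injective resolution $T \to I^\bullet$ in ${_{A\#B}\mathcal{M}}$. Lemma \ref{proj-Takeuchi-smash} states that $(A\#B)_A$ is free, so $(A\#B) \otimes_A -: {_A\mathcal{M}} \to {_{A\#B}\mathcal{M}}$ is exact; its right adjoint, the forgetful functor, therefore carries injectives to injectives. Consequently each $I^n$ is injective as an $A$-module, and $H^q(\Hom_A(M, I^\bullet)) \cong \Ext^q_A(M, T)$ as $k$-spaces. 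The one subtle point — and the main obstacle, such as it is — is matching the $B$-action that the Grothendieck construction places on these cohomology groups with the canonical one; this is handled precisely by the naturality of the formula \eqref{B-mod-on-Hom} in $T$ combined with the resolution-independence statement of Lemma \ref{H-mod str on Ext-group}. The Grothendieck spectral sequence then delivers the desired convergence.
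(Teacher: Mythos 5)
Your proposal is correct and is essentially the paper's argument repackaged as the Grothendieck composite-functor spectral sequence: the paper forms the double complex $\Hom_B(P_\bullet, \Hom_A(M, I^\bullet))$ from a $B$-projective resolution of $X$ and an $A\#B$-injective resolution of $T$, and rests on exactly the two inputs you identify --- the adjunction of Lemma \ref{Hom-Tensor-adj} (which makes each $\Hom_A(M,I^i)$ an injective $B$-module, equivalently makes $\Hom_A(M,-)$ send injectives to injectives) and the freeness of $(A\#B)_A$ from Lemma \ref{proj-Takeuchi-smash} (which makes each $I^i$ injective over $A$). Your appeal to Lemma \ref{H-mod str on Ext-group} to match the $B$-module structure on the $E_2$-page is likewise the intended reading of the statement, so no further comparison is needed.
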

\begin{proof}
	Let $P_{\bullet}$ be a projective resolution of $_BX$, and let $I^{\bullet}$ be an injective resolution of $_{A\#B}T$. Since $A\#B$ is a flat right $A$-module by Lemma \ref{proj-Takeuchi-smash}, each $I^i$ is injective as an $A$-module.
	Since $\Hom_A(M, I^i)$ is an injective $B$-module by Lemma \ref{Hom-Tensor-adj}, then the double complex $$\Hom_{A\#B}(M\#P_{\bullet}, I^{\bullet}) \cong \Hom_{B}(P_{\bullet}, \Hom_A(M, I^{\bullet}))$$
	yields the spectral sequence.
\end{proof}

\begin{cor}\label{proj-mod of smash product}
	For any $X \in {_{B}\mathcal{M}}$ and $M \in {_{A\#H}\mathcal{M}}$, $\pd_{A\#B}(M\#X) \leq \pd_A M + \pd_B X$. In particular, if both $_AM$ and $_BX$ are projective modules, then $M\#X$ is a projective $A\#B$-module.
\end{cor}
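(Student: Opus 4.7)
The corollary is essentially a direct consequence of the convergent spectral sequence from Proposition \ref{Ext-spec-seq-of-smash-prod-mod}, so the plan is simply to read off the vanishing of $\Ext^n_{A\#B}(M\#X, T)$ in large degrees from the vanishing of the $E_2$-page.

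Concretely, set $p_0 := \pd_A M$ and $q_0 := \pd_B X$. For any $n > p_0 + q_0$ and any $T \in {}_{A\#B}\mathcal{M}$, I would look at the $E_2$-term
\[
E_2^{p,q} = \Ext_B^p\bigl(X, \Ext_A^q(M,T)\bigr)
\]
with $p+q = n$. By the definition of projective dimension, $\Ext_A^q(M,T) = 0$ whenever $q > q_0$, and $\Ext_B^p(X, -) = 0$ whenever $p > p_0$; since $p+q = n > p_0 + q_0$ forces either $p > p_0$ or $q > q_0$, every such $E_2^{p,q}$ vanishes. Hence all subsequent pages vanish in total degree $n$, and convergence of the spectral sequence gives $\Ext_{A\#B}^n(M\#X, T) = 0$. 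As this holds for every $T$, we conclude $\pd_{A\#B}(M\#X) \le p_0 + q_0$.

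For the "in particular" statement, the cleanest route is to bypass the spectral sequence and appeal to Lemma \ref{Hom-Tensor-adj}(2): the adjunction
\[
\Hom_{A\#B}(M\#X, -) \cong \Hom_B\bigl(X, \Hom_A(M, -)\bigr)
\]
expresses $\Hom_{A\#B}(M\#X, -)$ as a composition of two functors. If $_AM$ is projective, then $\Hom_A(M, -)$ is exact; moreover, by Lemma \ref{proj-Takeuchi-smash}, ${}_A(A\#B)$ is free, so $\Hom_A(M, -)$ sends injectives (hence exact sequences) to injective $B$-modules in particular, but exactness alone is what is needed here. If $_BX$ is also projective, then $\Hom_B(X, -)$ is exact, so the composition is exact, i.e.\ $M\#X$ is projective.

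No real obstacle arises; the only thing to double-check is that the spectral sequence is first-quadrant and convergent, which is stated in Proposition \ref{Ext-spec-seq-of-smash-prod-mod}, and that the vanishing arguments for $\Ext_A^q(M,T)$ and $\Ext_B^p(X, -)$ apply uniformly in $T$, which they do since projective dimension is defined via the vanishing of $\Ext$ against arbitrary modules.
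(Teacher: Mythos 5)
Your argument is exactly the intended one: the paper states this as an immediate consequence of the convergent first-quadrant spectral sequence of Proposition \ref{Ext-spec-seq-of-smash-prod-mod}, and your reading-off of the vanishing on the $E_2$-page (uniformly in $T$) is the whole proof. The only blemish is a harmless notational swap --- with your conventions $p_0=\pd_A M$ and $q_0=\pd_B X$, the correct vanishing statements are $\Ext_A^q(M,T)=0$ for $q>p_0$ and $\Ext_B^p(X,-)=0$ for $p>q_0$, not the other way around --- which does not affect the conclusion since the bound depends only on the sum $p_0+q_0$.
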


There is also right module versions of 
Proposition \ref{Ext-spec-seq-of-smash-prod-mod} and Corollary \ref{proj-mod of smash product}.


\begin{prop} \label{right-case-spec-seq}
	For any $N \in {\mathcal{M}_A}$, $Y \in {^H\mathcal{M}_B}$ and $T \in {\mathcal{M}_{A\#B}}$,
	there is a convergent spectral sequence
	$$\Ext_{A^{op}}^p(N, \Ext_{B^{op}}^q(Y,T)) \Rightarrow \Ext^{p+q}_{(A\#B)^{op}}(N\#Y, T).$$
\end{prop}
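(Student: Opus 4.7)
The plan is to mirror the proof of Proposition~\ref{Ext-spec-seq-of-smash-prod-mod} on the other side, building a Cartan--Eilenberg-type double complex from the adjunction in Lemma~\ref{right-Hom-Tensor-adj}. Concretely, I would pick a projective resolution $Q_{\bullet} \to N \to 0$ in $\mathcal{M}_A$ and an injective resolution $0 \to T \to J^{\bullet}$ in $\mathcal{M}_{A\#B}$, then study the double complex
\[
D^{p,q} := \Hom_{(A\#B)^{op}}(Q_p \# Y, J^q) \cong \Hom_{A^{op}}(Q_p, \Hom_{B^{op}}(Y, J^q)),
\]
where the isomorphism is the one supplied by Lemma~\ref{right-Hom-Tensor-adj}.

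Before extracting the two spectral sequences, I need to verify two injectivity transfer statements. First, each $J^q$ is injective as a right $B$-module: this follows because ${}_B(A\#B)$ is free by Lemma~\ref{proj-Takeuchi-smash}, so restriction from $\mathcal{M}_{A\#B}$ to $\mathcal{M}_B$ has an exact left adjoint and preserves injectives. Second, $\Hom_{B^{op}}(Y, J^q)$ is injective as a right $A$-module: by Lemma~\ref{right-Hom-Tensor-adj} the functor $\Hom_{A^{op}}(-, \Hom_{B^{op}}(Y, J^q))$ is naturally isomorphic to $\Hom_{(A\#B)^{op}}(-\# Y, J^q)$, which is the composition of the $k$-exact functor $-\# Y$ with the exact functor $\Hom_{(A\#B)^{op}}(-, J^q)$.

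Using these two facts, I run the two standard spectral sequences of $D^{\bullet,\bullet}$. Filtering so that horizontal cohomology is taken first, the injectivity of $\Hom_{B^{op}}(Y, J^q)$ over $A^{op}$ collapses the $E_1$-page to the $p=0$ row $\Hom_{(A\#B)^{op}}(N\#Y, J^q)$, whose vertical cohomology then computes $\Ext^{p+q}_{(A\#B)^{op}}(N\#Y, T)$; this identifies the abutment. Filtering the other way, the injectivity of $J^{\bullet}$ over $B^{op}$ makes $\Hom_{B^{op}}(Y, J^{\bullet})$ an injective resolution computing $\Ext^{q}_{B^{op}}(Y, T)$, and then applying $\Hom_{A^{op}}(Q_{\bullet}, -)$ together with the projectivity of the $Q_p$ yields the $E_2$-page $\Ext_{A^{op}}^p(N, \Ext_{B^{op}}^q(Y, T))$. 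One should also appeal to the right-sided analogue of Lemma~\ref{H-mod str on Ext-group} to see that the inner $\Ext$ indeed carries the right $A$-module structure being used.

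The only genuine obstacle is the second injectivity transfer, since it is specific to the Takeuchi setting and not automatic from the twisted-tensor description; but the adjunction in Lemma~\ref{right-Hom-Tensor-adj} handles it cleanly, so the whole argument reduces to a routine Cartan--Eilenberg bookkeeping.
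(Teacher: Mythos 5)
Your proposal is correct and is precisely the argument the paper intends: it mirrors the double-complex proof of Proposition \ref{Ext-spec-seq-of-smash-prod-mod}, with the adjunction of Lemma \ref{right-Hom-Tensor-adj} replacing that of Lemma \ref{Hom-Tensor-adj}, the freeness of ${}_B(A\#B)$ from Lemma \ref{proj-Takeuchi-smash} giving the first injectivity transfer, and the exactness of $-\#Y$ giving the second. The two spectral sequences of the double complex $\Hom_{A^{op}}(Q_p,\Hom_{B^{op}}(Y,J^q))$ then identify the abutment and the $E_2$-page exactly as you describe.
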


\begin{cor}\label{proj-mod of smash product'}
	For any $N \in {\mathcal{M}_A}$, $Y \in {^H\mathcal{M}_{B}}$, $\pd_{(A\#B)^{op}}(N\#Y) \leq \pd_{A^{op}} N + \pd_{B^{op}} Y$. In particular, if both $N_A$ and $Y_B$ are projective modules, then $N\#Y$ is a projective right $A\#B$-module.
\end{cor}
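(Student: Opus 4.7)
The plan is to reduce the statement to the convergent spectral sequence already provided by Proposition \ref{right-case-spec-seq}. For any $T \in \mathcal{M}_{A\#B}$ one has
$$E_2^{p,q} = \Ext^p_{A^{op}}(N, \Ext^q_{B^{op}}(Y, T)) \Longrightarrow \Ext^{p+q}_{(A\#B)^{op}}(N\#Y, T),$$
so the projective dimension bound will follow by a standard vanishing argument on the $E_2$-page, and the ``in particular'' clause is then the case when both projective dimensions are zero.

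More concretely, set $d_1 = \pd_{A^{op}} N$ and $d_2 = \pd_{B^{op}} Y$, and fix an integer $n > d_1 + d_2$. For every pair $(p,q)$ with $p+q = n$ one must have either $p > d_1$ or $q > d_2$. In the first case $\Ext^p_{A^{op}}(N,-)$ vanishes on every module, so $E_2^{p,q} = 0$; in the second case $\Ext^q_{B^{op}}(Y,T) = 0$, so again $E_2^{p,q} = 0$. Since the spectral sequence converges and every term on the total-degree-$n$ antidiagonal of $E_2$ vanishes, the abutment satisfies $\Ext^n_{(A\#B)^{op}}(N\#Y, T) = 0$ for all $T \in \mathcal{M}_{A\#B}$. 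As this holds for every $n > d_1 + d_2$, we obtain $\pd_{(A\#B)^{op}}(N\#Y) \leq d_1 + d_2$, which is the desired inequality.

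For the final assertion, if $N_A$ and $Y_B$ are both projective then $d_1 = d_2 = 0$, and the inequality gives $\pd_{(A\#B)^{op}}(N\#Y) = 0$, i.e.\ $N\#Y$ is a projective right $A\#B$-module. The proof requires no additional input beyond Proposition \ref{right-case-spec-seq}; there is no genuine obstacle, as the only conceptual point is that the convergence and $E_2$-description of the spectral sequence hold for arbitrary $N$, $Y$ and $T$, which is exactly the content of that proposition. This also makes the argument structurally identical to that of Corollary \ref{proj-mod of smash product}, simply applied on the right-module side via the dual adjointness of Lemma \ref{right-Hom-Tensor-adj}.
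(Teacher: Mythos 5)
Your argument is correct and is exactly the intended derivation: the paper states this corollary immediately after Proposition \ref{right-case-spec-seq} precisely because the bound follows from the vanishing of the $E_2$-terms on the antidiagonals of total degree $n > \pd_{A^{op}} N + \pd_{B^{op}} Y$, as you describe. No differences from the paper's (implicit) proof.
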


\begin{prop}\label{decom-Tor-group}
	Suppose $M \in {_{A\#H}\mathcal{M}}$, $X \in {_{B}\mathcal{M}}$, $N \in \mathcal{M}_A$ and $Y \in {^{H}\mathcal{M}_B}$. Then,
	\begin{enumerate}
		\item $(N\#Y) \otimes_{A\#B} (M\#X) \cong (N \otimes_A M) \otimes (Y \otimes_B X)$.
		\item $\Tor^{A\#B}_n(N\#Y, M\#X) \cong \bigoplus\limits_{p+q=n} \Tor^{A}_p(N, M) \otimes \Tor^{B}_q(Y, X)$.
	\end{enumerate}
\end{prop}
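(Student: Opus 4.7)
The plan is to prove (1) by writing down an explicit vector-space isomorphism that absorbs the Takeuchi twist via the $H$-coaction on $Y$, and then deduce (2) by a standard Künneth / double-complex argument that applies (1) termwise.

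For (1), the naive map $(n\#y)\otimes(m\#x)\mapsto(n\otimes_A m)\otimes(y\otimes_B x)$ fails to be balanced over $A\#B$: testing with $(a\#1)$ reveals that the right $A\#B$-action on $N\#Y$ twists $a$ by the coaction on $y$ to produce $y_{-1}\rightharpoonup a$, while the left action on $M\#X$ uses $a$ directly. The fix is to absorb the twist into the formula by defining
\[
\Phi:(N\#Y)\otimes_{A\#B}(M\#X)\longrightarrow(N\otimes_A M)\otimes(Y\otimes_B X),\quad (n\#y)\otimes(m\#x)\longmapsto\sum_{(y)}(n\otimes_A(y_{-1}\rightharpoonup m))\otimes(y_0\otimes_B x),
\]
with candidate inverse using the antipode,
\[
\Psi\bigl((n\otimes_A m)\otimes(y\otimes_B x)\bigr)=\sum_{(y)}(n\# y_0)\otimes\bigl(((S^{-1}y_{-1})\rightharpoonup m)\# x\bigr).
\]
Verifying balancedness of $\Phi$ reduces, after expanding both sides of $\Phi((n\#y)(a\#b)\otimes(m\#x))=\Phi((n\#y)\otimes(a\#b)(m\#x))$, to three ingredients: the $A\#H$-module compatibility $y_{-1}\rightharpoonup(a(b_{-1}\rightharpoonup m))=\sum((y_{-1})_{(1)}\rightharpoonup a)\cdot((y_{-1})_{(2)}b_{-1}\rightharpoonup m)$; the Hopf module identity $\rho(y_0 b)=\rho(y_0)\rho(b)$; and coassociativity; once these are applied, elements of $A$ (resp. $B$) slide freely across $\otimes_A$ (resp. $\otimes_B$) and both sides collapse to the same expression. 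Well-definedness of $\Psi$ over $A$ and over $B$, and the identities $\Phi\Psi=\id$ and $\Psi\Phi=\id$, reduce after iterated-Sweedler expansion to the antipode axioms
\[
\sum_{(h)}h_{(2)}S^{-1}(h_{(1)})=\varepsilon(h)\cdot 1=\sum_{(h)}S^{-1}(h_{(2)})h_{(1)},
\]
which need bijectivity of $S$, combined with the counit property $\sum\varepsilon(y_{-1})y_0=y$ of the $H$-coaction on $Y$.

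For (2), I would take a projective resolution $P_\bullet\to M$ in ${}_{A\#H}\mathcal{M}$ and a projective resolution $Q_\bullet\to X$ in ${}_B\mathcal{M}$. Since $A\#H$ is free as a left $A$-module (via $A\#H\cong A\otimes H$), each $P_i$ is also $A$-projective, so Corollary \ref{proj-mod of smash product} ensures every term $P_i\# Q_j$ is projective in ${}_{A\#B}\mathcal{M}$. Because $k$ is a field, the rows $P_\bullet\otimes Q_j$ of the bicomplex $P_\bullet\# Q_\bullet$ remain exact, so the standard acyclic-assembly argument shows that the total complex of $P_\bullet\# Q_\bullet$ is a projective $A\#B$-resolution of $M\# X$. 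Applying $(N\#Y)\otimes_{A\#B}-$ termwise and invoking part (1) yields the bicomplex $(N\otimes_A P_\bullet)\otimes(Y\otimes_B Q_\bullet)$, whose total homology is, by the Künneth formula over the field $k$,
\[
\bigoplus_{p+q=n}\Tor_p^{A}(N,M)\otimes\Tor_q^{B}(Y,X),
\]
proving (2).

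The main obstacle is guessing the correct coaction twist in the definition of $\Phi$ (and hence $\Psi$) and pushing the Hopf-theoretic bookkeeping through to verify both that $\Phi$ descends to the $A\#B$-balanced tensor product and that $\Phi$ and $\Psi$ are mutual inverses. Once (1) is in hand, (2) is a routine Künneth-style assembly relying only on Corollary \ref{proj-mod of smash product} and the fact that $k$ is a field.
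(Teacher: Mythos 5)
Your proposal is correct and follows essentially the same route as the paper: the explicit twisted isomorphism $\Phi$ with inverse built from $S^{-1}$ is exactly the paper's map for (1), and the paper likewise proves (2) by totalizing $P_{\bullet}\#Q_{\bullet}$ (projective by Corollary \ref{proj-mod of smash product}), applying (1) termwise, and taking homology. The only difference is that you spell out the balancedness check and the K\"unneth step that the paper leaves as "easy to check" and "taking the homology."
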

\begin{proof}
	(1) It is easy to check that the map $\phi : (N\#Y) \otimes_{A\#B} (M\#X) \to (N \otimes_A M) \otimes (Y \otimes_B X)$
	$$(n\#y) \otimes_{A\#B} (m\#x) \mapsto \sum_{(y)} (n \otimes_A y_{-1} \rightharpoonup m) \otimes (y_0 \otimes_B x)$$
	is well-defined. The map $\phi$ is an isomorphism with the inverse $\phi^{-1}$  given by
	$$ (n \otimes_A m) \otimes (y \otimes_B x) \mapsto \sum_{(y)} (n\# y_0) \otimes_{A\#B} (S^{-1}y_{-1} \rightharpoonup m \, \# x).$$
	
	(2) Let $P_{\bullet}$ be a projective resolution of $_{A\#H}M$, and $Q_{\bullet}$ be a projective resolution of $_BX$. By Corollary \ref{proj-mod of smash product},
	the total complex $\mathrm{Tot}(P_{\bullet} \# Q_{\bullet})$ is a projective resolution of $_{A\#B}(M\#X)$. It follows from  (1) that we have an isomorphism of complexes
	$$(N\#Y) \otimes_{A\#B} \mathrm{Tot}(P_{\bullet} \# Q_{\bullet}) \cong \mathrm{Tot}((N \otimes_A P_{\bullet}) \otimes (Y \otimes_B Q_{\bullet})).$$
	The conclusion follows by taking the homology.
\end{proof}


\subsection{Ext-groups of Takeuchi smash product modules}

 As a special case in Lemma \ref{Hom-Tensor-adj} (1) where $B=H$, $\Hom_A(M,M')$ is a left $H$-module for $M, M' \in {_{A\#H}\mathcal{M}}$,  with the $H$-action given by
		$$(h \rightharpoonup f)(m) = \sum h_2 \rightharpoonup f(S^{-1}h_1 \rightharpoonup m).$$

\begin{lem}\label{an-iso-for-Ext}
	Let $M, M' \in {_{A\#H}\mathcal{M}}$, $X \in {_B^{H}\mathcal{M}}$, and $X' \in {_{B}\mathcal{M}}$.
%
There is a morphism
      \begin{align*}
\Phi: \Hom_A(M, M') \otimes \Hom_B(X, X') & \longrightarrow  \Hom_{B}(X, \Hom_A(M,M') \otimes X') \\
 f \otimes g & \mapsto [f, g]: x \mapsto (x_{-1} \rightharpoonup f) \otimes g(x_0)
      \end{align*}
  where $\Hom_A(M,M')\otimes X'$ is viewed as a left $B$-module via the algebra map
		$\rho: B \to H \otimes B$.

 If  ${}_BX$ is finitely presented, then $\Phi$ is an isomorphism.
\end{lem}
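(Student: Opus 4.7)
The plan is first to check that $\Phi$ is a well-defined $k$-linear map, then to use d\'evissage via Lemma \ref{relative-resolution} to reduce the isomorphism statement to the case $X = B\otimes V$ with $V$ a finite-dimensional $H$-comodule, and finally to verify that base case directly.

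For well-definedness, I would check that for fixed $f\in\Hom_A(M,M')$ and $g\in\Hom_B(X,X')$, the map $[f,g]\colon X\to \Hom_A(M,M')\otimes X'$ is $B$-linear, where the target carries the $B$-action $b\cdot(\alpha\otimes y)=\sum b_{-1}\rightharpoonup\alpha\otimes b_0 y$ induced by the algebra map $\rho\colon B\to H\otimes B$. For $b\in B$ and $x\in X$, the Hopf-module compatibility $\rho(bx)=\rho(b)\rho(x)$ together with $B$-linearity of $g$ gives
\[
[f,g](bx)=\sum (b_{-1}x_{-1})\rightharpoonup f\otimes g(b_0 x_0)=\sum b_{-1}\rightharpoonup(x_{-1}\rightharpoonup f)\otimes b_0 g(x_0)=b\cdot[f,g](x),
\]
and $k$-bilinearity of $(f,g)\mapsto[f,g]$ is immediate, so $\Phi$ is well defined. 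Naturality of $\Phi$ in $X$ along morphisms in $_B^H\mathcal{M}$ is likewise a direct check.

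Now assume $_BX$ is finitely presented. By Lemma \ref{relative-resolution}(1) applied twice (and Schanuel's lemma for the kernel), there exist finite-dimensional left $H$-comodules $V_0,V_1$ and an exact sequence $B\otimes V_1\to B\otimes V_0\to X\to 0$ in $_B^H\mathcal{M}$. The functor $\Hom_B(-,Y)$ is left exact for any $B$-module $Y$, and since $k$ is a field, tensoring with $\Hom_A(M,M')$ over $k$ preserves left exactness. Thus both $\Hom_A(M,M')\otimes\Hom_B(-,X')$ and $\Hom_B(-,\Hom_A(M,M')\otimes X')$ turn the above presentation into left-exact sequences, and naturality of $\Phi$ produces a commutative diagram between them. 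By the five-lemma it suffices to prove $\Phi$ is an isomorphism when $X=B\otimes V$ for a finite-dimensional $H$-comodule $V$.

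For the base case, use the canonical identifications $\Hom_B(B\otimes V,Y)\cong\Hom(V,Y)\cong V^*\otimes Y$, where the second uses $\dim V<\infty$. Under these identifications, an explicit computation (starting from the coaction $\rho(b\otimes v)=\sum b_{-1}v_{-1}\otimes b_0\otimes v_0$ and evaluating at elements of the form $1\otimes v$) shows that $\Phi$ becomes a canonical swap of the tensor factors $V^*$ and $\Hom_A(M,M')$ in $V^*\otimes\Hom_A(M,M')\otimes X'$, which is plainly bijective; the specialization $V=k$ (i.e.\ $X=B$) already shows $\Phi$ reduces to the identity under $\Hom_B(B,-)\cong(-)$, and the general finite-dimensional $V$ follows by the same mechanism. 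The main obstacle is purely bookkeeping: keeping the two different $B$-actions (on $X'$ versus on $\Hom_A(M,M')\otimes X'$) and the induced $H$-coactions aligned through the identifications, but this is routine Sweedler-notation manipulation once $\rho\colon B\to H\otimes B$ is exploited as an algebra map.
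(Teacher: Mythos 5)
Your argument is correct, but it takes a different route from the paper's. The paper factors $\Phi$ as the composite of two maps: first the canonical map $U \otimes \Hom_B(X, X') \to \Hom_B(X, U \otimes .X')$ (with $B$ acting on $X'$ alone), which is an isomorphism for $_BX$ finitely presented, and then an untwisting isomorphism $\Hom_B(X, U \otimes .X') \cong \Hom_B(X, U \otimes X')$, $F \mapsto \big(x \mapsto \sum x_{-1} \rightharpoonup F(x_0)\big)$, whose inverse is written down explicitly using the antipode; no d\'evissage is needed, and the twist coming from the $H$-coaction is handled once and for all at the level of $\Hom$-spaces. You instead resolve $X$ by relative-free objects $B \otimes V$ via Lemma \ref{relative-resolution}(1), use left exactness of both functors together with naturality of $\Phi$ in $X$ (which does hold, since morphisms in $_B^H\mathcal{M}$ are $H$-colinear) and the five lemma, and verify the base case by hand. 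Your approach effectively reproves the standard ``finitely presented'' isomorphism that the paper quotes, and pushes the coaction twist into the base case; the paper's approach is shorter and isolates the twist as a separate, always-invertible step. Both are valid, and your reduction steps (finite generation of $\ker(B\otimes V_0 \twoheadrightarrow X)$ from finite presentation, exactness over the field $k$) are sound.

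One small imprecision: in the base case the induced map on $V^* \otimes \Hom_A(M,M') \otimes X'$ is not literally the canonical swap of tensor factors; evaluating at $1 \otimes v$ gives $v \mapsto \sum (v_{-1} \rightharpoonup f) \otimes g(1 \otimes v_0)$, so the swap is composed with a twist by the $H$-coaction on $V$ and the $H$-action on $\Hom_A(M,M')$. That twist is an automorphism (inverted using $S$), so bijectivity — and hence your conclusion — is unaffected, but you should state the map as a twisted swap rather than a canonical one.
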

\begin{proof} For any $k$-vector space $U$, the natural map $U \otimes \Hom_B(X, X') \to \Hom_B(X, U \otimes .X'), u \otimes g\mapsto \big([u, g], x \mapsto u \otimes g(x)\big)$ is an isomorphism provided ${}_BX$ is finitely presented.

If $U$ is a left $H$-module, then
$\Hom_B(X, U \otimes .X') \cong \Hom_B(X, U \otimes X'), \, F \mapsto \big({\bar F}: x \mapsto \sum x_{-1} \rightharpoonup F(x_0)\big)$, where $U \otimes X'$ in the second $\Hom$ is viewed as left $B$-module via the algebra map $\rho: B \to H \otimes B$. The inverse map is $G \mapsto  \big({\tilde G}: x \mapsto  \sum Sx_{-1} \rightharpoonup G(x_0)\big)$.

The composition of above two maps with $U=\Hom_A(M, M')$ is the given morphism $\Phi$.
\end{proof}

\begin{prop}\label{decom-Ext-group}
	Let $M, M' \in {_{A\#H}\mathcal{M}}$, $X \in {_B^{H}\mathcal{M}}$, and $X' \in {_{B}\mathcal{M}}$.
	\begin{enumerate}
\item There is a natural morphism
		$$\psi: \Hom_A(M, M') \otimes \Hom_B(X, X') \longrightarrow \Hom_{A\#B}(M\#X, M'\# X')$$
		$$f \otimes g \mapsto \big(m\#x \mapsto \sum_{(x)} (x_{-1}\rightharpoonup f)(m) \# g(x_0)\big).$$
If $M$ viewed as $A$-module and $X$ viewed as $B$-module are finitely presented, then $\psi$ is an isomorphism.
		\item Suppose that $_AM$ and $_BX$ are pseudo-coherent modules. Then for any $n \in \mathbb{N}$
		$$ \Ext^n_{A\#B}(M\#X, M'\# X') \cong \bigoplus_{p+q=n} \Ext^p_{A}(M, M') \otimes \Ext^q_{B}(X, X').$$
	\end{enumerate}
\end{prop}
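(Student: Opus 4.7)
For (1), my plan is to realize $\psi$ as a composite of three natural isomorphisms. The adjointness in Lemma \ref{Hom-Tensor-adj}(2) gives $\Hom_{A\#B}(M\#X, M'\#X') \cong \Hom_B(X, \Hom_A(M, M'\#X'))$. Since $M'\#X'$ and $M'\otimes X'$ coincide as left $A$-modules (the $A$-action on $M'\#X'$ being via the first factor only), and since $_AM$ is finitely presented, the canonical map $\Hom_A(M, M')\otimes X' \to \Hom_A(M, M'\otimes X')$ is an isomorphism. A direct Sweedler-index calculation, using iterated coassociativity for the coaction $\rho\colon B\to H\otimes B$, shows it is $B$-linear — the $B$-structure on the target is given by Lemma \ref{Hom-Tensor-adj}(1), while the one on the source is induced from the $H$-module structure on $\Hom_A(M,M')$ pulled back along $\rho$. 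Finally, Lemma \ref{an-iso-for-Ext} (applicable since $_BX$ is finitely presented) provides $\Hom_A(M,M')\otimes\Hom_B(X, X')\cong \Hom_B(X, \Hom_A(M,M')\otimes X')$. Composing the three identifications and tracking elements recovers the explicit formula for $\psi$.

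For (2), my approach is to lift part (1) to the level of resolutions and invoke the K\"unneth theorem. By Lemma \ref{relative-resolution}(2), there exists a resolution $Q_\bullet \to X$ in $_B^H\mathcal{M}$ each of whose terms is finitely generated projective as a $B$-module. The key technical step is to produce an analogous resolution $P_\bullet \to M$ in $_{A\#H}\mathcal{M}$ each of whose terms is finitely generated projective as an $A$-module, built iteratively from the pseudo-coherence of $_AM$ together with the $H$-action. By Corollary \ref{proj-mod of smash product}, the total complex $\mathrm{Tot}(P_\bullet \# Q_\bullet)$ is then a projective $A\#B$-resolution of $M\#X$. Applying $\Hom_{A\#B}(-, M'\#X')$ and using part (1) term-by-term identifies the resulting double complex with $\Hom_A(P_\bullet, M')\otimes \Hom_B(Q_\bullet, X')$. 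Working over the field $k$, the K\"unneth formula for cochain complexes of $k$-vector spaces then yields
\[
H^n\bigl(\mathrm{Tot}(\Hom_A(P_\bullet, M')\otimes \Hom_B(Q_\bullet, X'))\bigr)\cong \bigoplus_{p+q=n}\Ext^p_A(M, M')\otimes \Ext^q_B(X, X'),
\]
and the left-hand side computes $\Ext^n_{A\#B}(M\#X, M'\#X')$.

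The hardest step will be producing the resolution $P_\bullet$ with the required finiteness, because the Finiteness Theorem for comodules used in Lemma \ref{relative-resolution} has no automatic analogue for $H$-modules (an $H$-submodule of $M$ generated by finitely many elements need not be finite-dimensional). I expect the construction to combine pseudo-coherence of $_AM$ with a relative-bar-type step: given a finite-dimensional $H$-module $V$ together with an $H$-equivariant map $V\to M$, the $A\#H$-module $A\otimes V$, with action $(a\#h)(a'\otimes v)=\sum a(h_1\rightharpoonup a')\otimes h_2 v$, is free of finite rank over $A$ and surjects onto the $A$-submodule generated by the image of $V$; an iterative choice then exhausts $M$ and each successive kernel. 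Once $P_\bullet$ is in hand, the remaining argument is straightforward double-complex and K\"unneth bookkeeping.
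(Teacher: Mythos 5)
Your argument for part (1) is correct and is essentially the paper's: the paper also factors $\psi$ through the adjunction of Lemma \ref{Hom-Tensor-adj}, the map $\sigma:\Hom_A(M,M')\otimes X'\to\Hom_A(M,M'\#X')$ (an isomorphism for $_AM$ finitely presented), and Lemma \ref{an-iso-for-Ext}.

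Part (2), however, has a genuine gap, and it is exactly at the step you flag as the hardest. Your plan requires a resolution $P_\bullet\to M$ in $_{A\#H}\mathcal{M}$ whose terms are finitely generated projective over $A$, and your proposed construction needs, at each stage, a finite-dimensional $H$-module $V$ mapping $H$-equivariantly to $M$ (resp.\ to the successive kernels) whose image generates over $A$. This amounts to requiring the $H$-action to be locally finite, which is false for general Hopf algebras $H$ and general $A\#H$-modules $M$ --- as you yourself observe, the Finiteness Theorem is a statement about comodules and has no module analogue. ``An iterative choice then exhausts $M$'' is precisely the assertion that cannot be justified: a finite set of $A$-generators of $M$ need not lie in any finite-dimensional $H$-submodule, so no such $V$ need exist, and the proposed $A\otimes V$ need not surject onto $M$. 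The asymmetry of the hypotheses (Lemma \ref{relative-resolution} is stated only on the comodule side) is a structural feature of the problem, not an oversight.

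The paper circumvents this by never asking for a single resolution with both properties. It takes an arbitrary projective resolution $P_\bullet$ of $M$ in $_{A\#H}\mathcal{M}$ (no finiteness over $A$) and, separately, a finitely generated projective resolution $\widetilde P_\bullet$ of $M$ over $A$ alone (no $H$-equivariance), which exists by pseudo-coherence. Since $P_\bullet$ and $\widetilde P_\bullet$ are homotopy equivalent as complexes of $A$-modules, and $\sigma_\bullet:\Hom_A(\widetilde P_\bullet,M')\otimes X'\to\Hom_A(\widetilde P_\bullet,M'\#X')$ is an isomorphism of complexes, the corresponding map for $P_\bullet$ is a quasi-isomorphism of complexes of $B$-modules (though not an isomorphism in each degree). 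Applying $\Hom_B(Q_\bullet,-)$ with $Q_\bullet$ the finitely generated projective resolution in $_B^H\mathcal{M}$ from Lemma \ref{relative-resolution} preserves this quasi-isomorphism, and the rest is the double-complex and K\"unneth bookkeeping you describe. If you replace your construction of $P_\bullet$ by this homotopy-equivalence argument, your proof of (2) goes through.
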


\begin{proof}
(1) As noted before, $\Hom_A(M,M')\otimes X'$ is viewed as a left $B$-module  by the algebra morphism
		$\rho: B \to H \otimes B$, that is,  $b \rightharpoonup (f \otimes y) := \sum (b_{-1} \rightharpoonup f) \otimes b_0y.$
 By \eqref{B-mod-on-Hom}, $\Hom_A(M, M'\#X')$ is a left $B$-module.
 Then the natural  morphism $\sigma: \Hom_A(M,M') \otimes X' \to \Hom_A(M,M'\#X'), f \otimes y \mapsto \big(m \mapsto f(m) \# y \big)$ is a $B$-module morphism. If $_AM$ is finitely presented as an $A$-module, then $\sigma$ is  an  isomorphism.

The composition $\psi$ in the following commutative diagram is the morphism as claimed.
$$
\xymatrix{
  \Hom_A(M,M') \otimes \Hom_B(X,X')  \ar@{.>}[d]^{\psi}\quad  \ar[r]^{\text{Lemma \ref{an-iso-for-Ext}}} & \quad \Hom_{B}(X, \Hom_A(M,M')\#X') \ar[d]^{\sigma_*}  \\
  \Hom_{A\#B}(M\#X, M'\#X')
                & \Hom_{B}(X, \Hom_A(M,M'\#X')) \ar[l]^{\cong}_{\text{Lemma \ref{Hom-Tensor-adj}}}
                 }
$$
If both $_AM$ and $_BX$ are finitely presented, then $\psi$ is an isomorphism.

		
   (2) By Lemma \ref{relative-resolution}(2), there is an exact sequence
		$ \cdots \to Q_n \to \cdots \to Q_0 \to X \to 0$ in $_B^H\mathcal{M}$ with $Q_n=B \otimes V_n$ where $V_n$ is some finite-dimensional $H$-comodule.
		Let $P_{\bullet}$ be a projective resolution of $_{A\#H}M$. It follows from Corollary \ref{proj-mod of smash product} that $\mathrm{Tot}(P_{\bullet} \# Q_{\bullet})$ is a projective resolution of $_{A\#B}(M\#X)$.
Since ${_AM}$ is pseudo-coherent, there is a finitely generated $A$-projective resolution ${\widetilde P}_{\bullet} \to M \to 0$ of $M$. Then, $ P_{\bullet}$ and ${\widetilde P}_{\bullet}$ are homotopic equivalent, and the following is a commutative diagram of complexes
$$
\xymatrix{
  \Hom_A(P_{\bullet},M')\otimes X' \ar[r]^{\sigma_\bullet}  \ar[d]^{\text{homo. equ.}} & \Hom_A(P_{\bullet}, M'\# X') \ar[d]^{\text{homo. equ.}}   \\
 \Hom_A({\widetilde P}_{\bullet},M')\otimes X'  \ar[r]^{\sigma_\bullet}_{\cong}
                & \Hom_A({\widetilde P}_{\bullet},M'\# X')
                 }
$$
It follows that $ \Hom_A(P_{\bullet},M')\otimes X' \stackrel{\sigma_\bullet}{\longrightarrow} \Hom_A(P_{\bullet}, M'\# X')$ is a quasi-isomorphism of $B$-module complexes. Hence
$$\mathrm{Tot}(\Hom_{B}(Q_{\bullet}, \Hom_A(P_{\bullet},M')\otimes X'))\xrightarrow{\sim} \mathrm{Tot}(\Hom_{B}(Q_{\bullet}, \Hom_A(P_{\bullet},M'\#X')))$$
where $\xrightarrow{\sim}$ means quasi-isomorphism.
		\begin{align*}
		 & \Hom_{A\#B}(\mathrm{Tot}(P_{\bullet} \# Q_{\bullet}), M'\#X') \\
		\cong & \mathrm{Tot}(\Hom_{B}(Q_{\bullet}, \Hom_A(P_{\bullet},M'\#X')))  \quad \text{ (by Lemma \ref{Hom-Tensor-adj})} \\
		\xleftarrow{\sim} & \mathrm{Tot}(\Hom_{B}(Q_{\bullet}, \Hom_A(P_{\bullet},M')\otimes X'))  \\
		\cong & \mathrm{Tot}(\Hom_A(P_{\bullet},M') \otimes \Hom_{B}(Q_{\bullet}, X')) \quad \text{ (as $Q_n=B \otimes V_n$ is finitely generated projective)}.
		\end{align*}
	 The conclusion follows by taking cohomologies.
\end{proof}

\begin{cor}
	Suppose further both $A$ and $B$ are semi-primary rings over a perfect field $k$. If the Jacobson radical $J_A$ of $A$ is an $H$-submodule, and the Jacobson radical $J_B$ of $B$ is an $H$-subcomodule, then $\gld A\#B = \gld A + \gld B$.
\end{cor}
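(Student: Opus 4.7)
The plan is to identify $A\#B$ as a semi-primary ring with semisimple quotient $(A/J_A)\#(B/J_B)$, then sandwich $\gld(A\#B)$ between bounds from Corollary~\ref{proj-mod of smash product} and Proposition~\ref{decom-Ext-group}. I first verify that $I := J_A\#B + A\#J_B$ is a two-sided ideal of $A\#B$. The hypothesis that $J_A$ is an $H$-submodule gives $b_{-1}\rightharpoonup J_A\subseteq J_A$, so $(a\#b)(j\#b') = \sum a(b_{-1}\rightharpoonup j)\#b_0 b'\in J_A\#B$, and right multiplication is immediate; analogously, the $H$-subcomodule hypothesis on $J_B$ gives $\rho(J_B)\subseteq H\otimes J_B$, making $A\#J_B$ a two-sided ideal. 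The quotient map descends to an algebra isomorphism $(A\#B)/I\cong(A/J_A)\#(B/J_B)$. A direct computation shows $(J_A\#B)^n\subseteq J_A^n\#B=0$ and $(A\#J_B)^m\subseteq A\#J_B^m=0$; since both summands are two-sided ideals, $(J_A\#B)(A\#B)(J_A\#B)\subseteq(J_A\#B)^2$ (and symmetrically), so a pigeonhole on products of length $n+m-1$ in $I$ yields $I^{n+m-1}=0$, and hence $I\subseteq J(A\#B)$.

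Write $\bar A := A/J_A$ and $\bar B := B/J_B$. Granting that $\bar A\#\bar B$ is semisimple (see below), $J(A\#B) = I$, so $A\#B$ is semi-primary and $\gld(A\#B)=\pd_{A\#B}(\bar A\#\bar B)$. For the upper bound, Corollary~\ref{proj-mod of smash product} applied with $M=\bar A$ (a left $A\#H$-module, since $J_A$ is $H$-stable) and $X=\bar B$ (a left $B$-module) gives
\[\pd_{A\#B}(\bar A\#\bar B)\le\pd_A\bar A+\pd_B\bar B=\gld A+\gld B.\]
For the lower bound, Proposition~\ref{decom-Ext-group}(2) applied with $M=\bar A$ and with $X=\bar B$ viewed as an $(H,B)$-Hopf module (since $J_B$ is an $H$-subcomodule) yields
\[\Ext^{\gld A+\gld B}_{A\#B}(\bar A\#\bar B,\bar A\#\bar B)\supseteq\Ext^{\gld A}_A(\bar A,\bar A)\otimes_k\Ext^{\gld B}_B(\bar B,\bar B).\]
For a semi-primary ring $R$ we have $\gld R=\sup_S\pd_R S$ over simple $S$, and for any simple $S$ attaining the supremum there is a simple $S'$ with $\Ext^{\gld R}_R(S,S')\ne 0$; since every simple $R$-module is a direct summand of $R/J(R)$, the top-degree self-Ext of $R/J(R)$ is nonzero. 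Hence both tensor factors above are nonzero, forcing $\pd_{A\#B}(\bar A\#\bar B)\ge\gld A+\gld B$. Combining the two bounds gives the equality.

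The main obstacle is the semisimplicity of $\bar A\#\bar B$, since a twisted tensor product of two semisimple $k$-algebras need not be semisimple in general (skew group algebras by groups whose order is not invertible in $k$ are classical counterexamples). The perfectness of $k$ ensures $\bar A$ and $\bar B$ are separable, and I expect the argument uses an $H$-equivariant Wedderburn--Malcev splitting to reduce the Takeuchi twist on the semisimple quotients to the ordinary tensor product $\bar A\otimes_k\bar B$, which is semisimple by separability. A secondary technical wrinkle is the pseudo-coherence hypothesis in Proposition~\ref{decom-Ext-group}(2): $\bar A$ and $\bar B$ are automatically pseudo-coherent when $A$ and $B$ are additionally left Noetherian (via Hopkins--Levitzki in the Artinian case), while in the general semi-primary setting one might substitute the Tor-decomposition of Proposition~\ref{decom-Tor-group} together with the coincidence of weak and ordinary global dimension.
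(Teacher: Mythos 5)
Your argument is essentially the paper's: the paper likewise sets $J := J_A\#B + A\#J_B$, observes that $J$ is nilpotent with $(A\#B)/J \cong (A/J_A)\#(B/J_B)$ semisimple (asserting this from the perfectness of $k$ with no further detail), concludes that $A\#B$ is semi-primary, and then obtains both inequalities at once from Auslander's characterization $\gld\Lambda = \sup\{n \mid \Ext^n_\Lambda(\Lambda/J,\Lambda/J)\neq 0\}$ combined with the K\"unneth decomposition of Proposition \ref{decom-Ext-group}; your separate upper bound via Corollary \ref{proj-mod of smash product} is only a repackaging of the vanishing half of that decomposition.

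Two remarks on the points you flag, both of which are left implicit in the paper as well. First, the semisimplicity of $\bar{A}\#\bar{B}$ is indeed the only place perfectness enters, but your proposed mechanism --- an equivariant Wedderburn--Malcev splitting ``reducing the Takeuchi twist to the ordinary tensor product'' --- cannot be the right one, because the twist is not removable: for a Galois extension $K/k$ with group $G$ of order $n$ invertible in $k$ one has $J_A=J_B=0$ and $K\#kG\cong M_n(k)$, which is not isomorphic to $K\otimes_k kG\cong K^{\,n}$. What is actually needed is that separability of $\bar{A}$ and $\bar{B}$ over the perfect field forces $\bar{B}\subseteq\bar{A}\#\bar{B}$ to be a separable ring extension (every module is then a summand of an induced module $\bar{A}\#X$, which is projective by Corollary \ref{proj-mod of smash product}); this deserves an argument or a reference rather than a hoped-for untwisting. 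Second, your worry about the pseudo-coherence hypothesis in Proposition \ref{decom-Ext-group}(2) is legitimate --- for a general semi-primary ring $J_A$ need not be finitely generated, so $A/J_A$ need not be pseudo-coherent over $A$ --- and your suggested repair via the Tor-decomposition of Proposition \ref{decom-Tor-group} (which carries no finiteness hypotheses) together with the coincidence of weak and global dimension for semi-primary rings, also due to Auslander, is a clean way to close it; in that respect your write-up is more careful than the proof in the paper.
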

\begin{proof}
	By assumption, $J := A \# J_B + J_A \# B$ is a nilpotent ideal of $A\#B$. Since $k$ is a perfect field, $(A\#B)/ J \cong A/J_A \# B/J_B$ is a semisimple ring. Hence $A\#B$ is also a semi-primary ring. By \cite[Corollary 12]{Aus}, $\gld A\#B = \sup\{ n \mid \Ext^n_{A\#B}((A\#B)/J, (A\#B)/J) \}$. The conclusion follows from Proposition \ref{decom-Ext-group}.
\end{proof}

Also we have the right version of Proposition \ref{decom-Ext-group}.

For any $M \in {\mathcal{M}}_{A\#H}$, $M$ is sometimes viewed as a left $H$-module via $h \rightharpoonup m := m(1 \# Sh)$. Thus $h \rightharpoonup (ma) = \sum\limits_{(h)} (h_1 \rightharpoonup m)(h_2 \rightharpoonup a)$. For any $M, N \in \mathcal{M}_{A\#H}$, $\Hom_{A^{op}}(M,N) \in {_H\mathcal{M}}$, with the $H$-action given by
\begin{equation}\label{H-mod on right Hom set}
(h \rightharpoonup f)(m) = \sum_{(h)} h_1 \rightharpoonup f(Sh_2 \rightharpoonup m)
\end{equation}
where $m \in M, h \in H, f \in \Hom_{A^{op}}(M,N)$.

\begin{prop}\label{decom-Ext-group'}
	Let $M \in {\mathcal{M}}_{A\#H}$, $N \in {\mathcal{M}}_{A}$, and $X, Y \in {^{H}\mathcal{M}_B}$. Suppose $X_B$ is finitely generated. Then,
	\begin{enumerate}
		\item $\Hom_{B^{op}}(X,Y) \in {^H\mathcal{M}}$, with the $H$-coaction given by
		\begin{eqnarray}\label{H-comod on right Hom set}
			\rho(f)(x) = \sum_{(x)} f(x_0)_{-1}S^{-1}x_{-1} \otimes f(x_0)_0
		\end{eqnarray}
		for any $x \in X$ and $f \in \Hom_A(M,N)$.
		\item There exists a natural morphism
		$$\phi: \Hom_{A^{op}}(M,N) \otimes \Hom_{B^{op}}(X,Y) \longrightarrow \Hom_{(A\#B)^{op}}(M\#X, N\#Y)$$
		$$f \otimes g \mapsto \big(m\#x \mapsto \sum_{(g)} f(m (1 \# Sg_{-1})) \# g_0(x)\big)$$
		for any $m \in M, x \in X$. If $M_A$ and $X_B$ are finitely presented modules, then $\phi$ is an isomorphism.
		\item Suppose that $M_A$ and $X_B$ are pseudo-coherent modules. Then for any $n \in \mathbb{N}$
		$$ \Ext^n_{(A\#B)^{op}}(M\#X, N\#Y) \cong \bigoplus_{p+q=n} \Ext^p_{A^{op}}(M, N) \otimes \Ext^q_{B^{op}}(X, Y).$$
	\end{enumerate}
\end{prop}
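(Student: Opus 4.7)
My plan is to mirror the proof of Proposition \ref{decom-Ext-group} step by step, replacing each ingredient by its already-established right-sided analogue.

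For part (1), I would check directly that the formula \eqref{H-comod on right Hom set} is well defined (it lands in $\HOM_{B^{op}}(X,Y) = \Hom_{B^{op}}(X,Y)$ because $X_B$ is finitely generated, by the right-sided analogue of Lemma \ref{H-comod-str-on-Hom}) and satisfies the comodule axioms. The coassociativity and counit verifications are formally identical to those of the left version, with the antipode replaced by $S^{-1}$ to accommodate the switch from a left $B$-action to a right one; in other words, one can derive everything by viewing a right $B$-module as a left $B^{op}$-module and noting that passing from $H$ to $H^{cop}$ converts the left-comodule coaction formula \eqref{H-comod-on-HOM} into \eqref{H-comod on right Hom set}.

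For part (2), I would first construct an intermediate morphism
\[
\Phi': \Hom_{A^{op}}(M,N) \otimes \Hom_{B^{op}}(X,Y) \longrightarrow \Hom_{B^{op}}\bigl(X,\ \Hom_{A^{op}}(M,N) \otimes Y\bigr),
\]
sending $f \otimes g$ to the map $x \mapsto \sum_{(g)} (g_{-1} \rightharpoonup f) \otimes g_0(x)$, where the $H$-action on $\Hom_{A^{op}}(M,N)$ is the one from \eqref{H-mod on right Hom set} and the target is a right $B$-module via the coalgebra map $\rho: B \to H \otimes B$. A direct calculation, using part (1), shows that $\Phi'$ is $B^{op}$-linear, and it is an isomorphism when $X_B$ is finitely presented by the standard argument that $U \otimes \Hom_{B^{op}}(X,Y) \cong \Hom_{B^{op}}(X, U \otimes Y)$. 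Next I use the $B^{op}$-linear isomorphism $\Hom_{A^{op}}(M,N) \otimes Y \cong \Hom_{A^{op}}(M, N\#Y)$, which holds when $M_A$ is finitely presented and which, combined with the antipode-twist identification analogous to the one in the proof of Lemma \ref{an-iso-for-Ext}, produces the factor $1 \# Sg_{-1}$ appearing in the definition of $\phi$. Finally, composing with the adjunction isomorphism of Lemma \ref{right-Hom-Tensor-adj}(2) yields exactly $\phi$, and it is an isomorphism whenever $M_A$ and $X_B$ are both finitely presented.

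For part (3), invoke the right-sided Lemma \ref{relative-resolution}(2) to obtain a resolution $\cdots \to V_n \otimes B \to \cdots \to V_0 \otimes B \to X \to 0$ in ${^H\mathcal{M}_B}$ with $V_n$ finite-dimensional, so each $Q_n := V_n \otimes B$ is finitely generated projective in $\mathcal{M}_B$. Take a projective resolution $P_\bullet \to M$ in $\mathcal{M}_{A\#H}$ and, using pseudo-coherence of $M_A$, a finitely generated $A$-projective resolution $\widetilde{P}_\bullet \to M$. By Corollary \ref{proj-mod of smash product'}, $\mathrm{Tot}(P_\bullet \# Q_\bullet)$ is a projective resolution of $(M\#X)_{A\#B}$. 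Repeating the homotopy-argument diagram from the proof of Proposition \ref{decom-Ext-group}(2) with $P_\bullet$ and $\widetilde{P}_\bullet$ shows that the natural map $\Hom_{A^{op}}(P_\bullet, N) \otimes Y \to \Hom_{A^{op}}(P_\bullet, N\#Y)$ is a quasi-isomorphism of right $B$-module complexes. Combining Lemma \ref{right-Hom-Tensor-adj} with part (2) applied termwise, we obtain
\[
\Hom_{(A\#B)^{op}}\bigl(\mathrm{Tot}(P_\bullet \# Q_\bullet),\ N\#Y\bigr) \simeq \mathrm{Tot}\bigl(\Hom_{A^{op}}(P_\bullet, N) \otimes \Hom_{B^{op}}(Q_\bullet, Y)\bigr),
\]
and taking cohomology gives the claimed decomposition, since each $Q_n$ is finitely generated projective as right $B$-module so $\Hom_{B^{op}}(Q_\bullet, Y)$ computes $\Ext_{B^{op}}^\bullet(X,Y)$. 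The main obstacle, as in every right-sided calculation involving Hopf actions, is bookkeeping: one must ensure that the antipode twists (both $S$ and $S^{-1}$) introduced in \eqref{right AxB-mod-on-tensor-2}, \eqref{H-mod on right Hom set}, and \eqref{H-comod on right Hom set} cancel correctly when one composes $\Phi'$, the $\sigma$-type isomorphism, and the $\Hom$--tensor adjunction; once these compatibilities are confirmed, the rest is a routine translation of the left-sided argument.
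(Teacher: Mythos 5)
Your proposal is correct and is essentially the paper's own (unwritten) proof: the paper presents Proposition \ref{decom-Ext-group'} simply as the right-module mirror of Proposition \ref{decom-Ext-group}, and your dualization --- the relative resolution $Q_n = V_n \otimes B$ from Lemma \ref{relative-resolution}, the quasi-isomorphism $\Hom_{A^{op}}(P_\bullet, N)\otimes Y \to \Hom_{A^{op}}(P_\bullet, N\#Y)$ via a finitely generated $A$-projective resolution of $M$, and the finitely-generated-projective identification over $Q_\bullet$ --- is exactly that argument. The one adjustment needed is a citation: since $M\#X$ carries the module structure \eqref{right AxB-mod-on-tensor-2}, the adjunction to invoke is the (asserted but unstated) right-module version of Lemma \ref{Hom-Tensor-adj}, namely $\Hom_{(A\#B)^{op}}(M\#X,T)\cong \Hom_{B^{op}}(X,\Hom_{A^{op}}(M,T))$ with outer $\Hom$ over $B^{op}$, rather than Lemma \ref{right-Hom-Tensor-adj}, which is the adjunction for the type-\eqref{right AxB-mod-on-tensor} product $N\#Y$ and has outer $\Hom$ over $A^{op}$.
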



\section{Yoneda Ext-algebras of Takeuchi smash products}

\subsection{Smash products of differential graded algebras}


A graded algebra $\Lambda = \bigoplus _{n \in \Z} \Lambda_n$ is called a graded $H$-module algebra, if $\Lambda$ is an $H$-module algebra, and $h \rightharpoonup \Lambda_n \subseteq \Lambda_n$ for all $h \in H$ and $n \in \Z$. For any non-zero homogeneous element $\lambda \in \Lambda_n$, $|\lambda| := n$ is called the degree of $\lambda$.

Let $\Lambda$ be a differential graded algebra (or dga, for short), that is, $\Lambda = \bigoplus \Lambda_n$ is a graded algebra with a graded map $d: \Lambda \to \Lambda$ of degree $1$, such that $d^2 = 0$ and the Leibnitz rule holds:
$$d(\lambda \cdot \lambda') = d(\lambda) \cdot \lambda' + (-1)^{|\lambda|} \lambda \cdot d(\lambda'),  \, \text{ for any  homogeneous element } \lambda \text{ and } \lambda' \in \Lambda.$$
$\Lambda$ is called a {\it differential graded left $H$-module algebra}, if $\Lambda = \bigoplus \Lambda_n$ is a graded $H$-module algebra and the differential $d$ is an $H$-morphism, that is, $d(h \rightharpoonup \lambda) = h \rightharpoonup d(\lambda)$ for all $h \in H$ and $\lambda \in \Lambda$.

{\it Differential graded left $H$-comodule algebras} are defined analogously.

Suppose that $\Lambda$ is a differential graded left $H$-module algebra, and $\Gamma$ is a differential graded left $H$-comodule algebra. Then
$\Lambda\#\Gamma$ is also a (differential) graded algebra with the  multiplication defined by
$$(\lambda\#\gamma)(\lambda'\#\gamma') = \sum_{(\gamma)} (-1)^{|\gamma||\lambda'|} \lambda(\gamma_{-1} \rightharpoonup \lambda') \# \gamma_0 \gamma'$$
and the differential $d_{\Lambda \# \Gamma}$ defined by
$$d_{\Lambda \# \Gamma}(\lambda\#\gamma) = d_{\Lambda}(\lambda) \# \gamma + (-1)^{|\lambda|} \lambda \# d_{\Gamma}(\gamma)$$
for any homogeneous elements $\lambda, \lambda' \in \Lambda$, and $\gamma, \gamma' \in \Gamma$.

Let $R$ be a ring,  $P_{\bullet}$ and $Q_{\bullet}$ be two $R$-module complexes. Consider the graded space $\Hom^{\bullet}_R(P_{\bullet}, Q_{\bullet})$ with
$$ \Hom^n_R(P_{\bullet}, Q_{\bullet}) = \prod_i \Hom_R(P_i, Q_{i-n}). $$
The differentials on $P_{\bullet}$ and $Q_{\bullet}$ which are of degree $-1$ give rise to a differential $d_{\Hom}$ of degree $1$ on $\Hom^{\bullet}_R(P_{\bullet}, Q_{\bullet})$, where
$d_{\Hom}(f) = d^Q \circ f - (-1)^{|f|}f \circ d^P = (d^Q_{i-|f|} \circ f_i - (-1)^{|f|}f_{i-1} \circ d^P_i)$
for $f =(f_i)_i$, and $f_i \in \Hom_R(P_i, Q_{i-|f|})$.

In fact, $\End^{\bullet}_R(P_{\bullet}) = \Hom^{\bullet}_R(P_{\bullet},P_{\bullet})$ is a dga, as  $d_{\Hom}(fg) = d_{\Hom}(f)g + (-1)^{|f|} fd_{\Hom}(g)$  for any $f,g \in \End^{\bullet}_R(P_{\bullet})$. So, the cohomology group $\mathrm{H}^*(\End^{\bullet}_R(P_{\bullet}))$ has a graded algebra structure.

\begin{prop}\label{Hom-mod-comod-dga}
	Let $M_{\bullet}$ be a complex in $\mathcal{M}_{A\#H}$, and $X_{\bullet}$ be a complex in $^H\mathcal{M}_B$. Suppose that each term $X_i$ viewed as a $B^{op}$-module is finitely generated. Then
	\begin{enumerate}
		\item $\End_{A^{op}}^{\bullet}(M_{\bullet})$ is a differential graded left $H$-module algebra, with the module structure given in \eqref{H-mod on right Hom set}.
		\item $\End_{B^{op}}^{\bullet}(X_{\bullet})$ is a differential graded left $H$-comodule algebra, with the comodule structure given in \eqref{H-comod on right Hom set}.
		\item There is a differential graded algebra morphism
		$$\varphi: \End_{A^{op}}^{\bullet}(M_{\bullet}) \# \End_{B^{op}}^{\bullet}(X_{\bullet}) \to \End_{(A\#B)^{op}}^{\bullet}(\mathrm{Tot}(M_{\bullet} \# X_{\bullet}))$$
		$$f \# g \mapsto \big( m\#x \mapsto (-1)^{|g| |m|} \sum\limits_{(g)} f(g_{-1} \rightharpoonup m) \# g_0(x)\big),$$
		which is induced by Proposition \ref{decom-Ext-group'} (2).
	\end{enumerate}
\end{prop}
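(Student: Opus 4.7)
The plan is to verify the three parts in order. Parts (1) and (2) upgrade the $H$-module structure \eqref{H-mod on right Hom set} and the $H$-comodule structure \eqref{H-comod on right Hom set} from individual Hom spaces to the total $\End^\bullet$, checking that the Hom-complex differential respects them; part (3) is the differential graded enhancement of the algebra morphism in Proposition \ref{decom-Ext-group'}(2).

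For part (1), I would extend the action \eqref{H-mod on right Hom set} componentwise to $\End_{A^{op}}^{n}(M_\bullet) = \prod_i \Hom_{A^{op}}(M_i, M_{i-n})$; this is grading-preserving by construction. The module algebra law $h \rightharpoonup (fg) = \sum (h_1 \rightharpoonup f)(h_2 \rightharpoonup g)$ is a direct computation from \eqref{H-mod on right Hom set} using the antipode axiom. The new DG ingredient is $H$-linearity of $d_{\End}$: since each $d^M_i$ is $A\#H$-linear, it is in particular $H$-linear, so $d^M \circ (h \rightharpoonup f) = h \rightharpoonup (d^M \circ f)$ and similarly on the other side, yielding $d_{\End}(h \rightharpoonup f) = h \rightharpoonup d_{\End}(f)$.

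For part (2), the hypothesis that each $X_i$ is finitely generated as a right $B$-module is exactly what Proposition \ref{decom-Ext-group'}(1) requires to make $\rho$ a genuine coaction on each $\Hom_{B^{op}}(X_i, X_j)$, and componentwise assembly then yields a graded coaction on $\End^\bullet_{B^{op}}(X_\bullet)$. The comodule algebra identity $\rho(fg) = \sum f_{-1} g_{-1} \otimes f_0 g_0$ is a direct chase from \eqref{H-comod on right Hom set} combined with $B^{op}$-linearity of $f$ and $g$, and colinearity of $d_{\End}$ follows from the fact that each $d^X_i$ is a morphism in ${}^H\mathcal{M}_B$, hence $H$-colinear.

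For part (3), the formula for $\varphi$ is well-defined and grading-preserving by inspection. Multiplicativity is obtained by applying Proposition \ref{decom-Ext-group'}(2) componentwise, supplemented by a Koszul-sign check: the twisted product $(f\#g)(f'\#g') = \sum (-1)^{|g||f'|} f(g_{-1} \rightharpoonup f') \# g_0 g'$ on the source is intertwined with composition on the target. For compatibility with differentials I would evaluate both $d_{\End}\varphi(f\#g)$ and $\varphi\bigl(d(f)\#g + (-1)^{|f|} f\# d(g)\bigr)$ on $m \# x$, using $d_{\mathrm{Tot}}(m\#x) = d^M(m)\#x + (-1)^{|m|} m \# d^X(x)$ together with the $H$-linearity of $d^M$ and the $H$-colinearity of $d^X$ established in parts (1) and (2). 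The main obstacle is Koszul-sign bookkeeping: four distinct sources of signs interact — composition in $\End^\bullet$, the twisted product in the smash of DGAs, the Leibniz rules for $d_{\End}$ and $d_{\mathrm{Tot}}$, and the explicit $(-1)^{|g||m|}$ in the definition of $\varphi$ — so one must verify their mutual consistency. Once the conventions are fixed, the algebraic content reduces to Proposition \ref{decom-Ext-group'}(2) and the Hopf identities already deployed in parts (1)–(2), with no further conceptual input required.
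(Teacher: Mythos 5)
Your proposal is correct and follows essentially the same route as the paper: componentwise extension of the structures from \eqref{H-mod on right Hom set} and \eqref{H-comod on right Hom set}, direct verification of the (co)module algebra axioms via the Hopf identities, compatibility of $d_{\Hom}$ with the $H$-(co)action deduced from the $A\#H$-linearity of $d^M$ and the $H$-colinearity of $d^X$, and a sign-bookkeeping computation for multiplicativity and the cochain-map property of $\varphi$. The only caveat is that the multiplicativity of $\varphi$ is not literally "Proposition \ref{decom-Ext-group'}(2) applied componentwise" but requires the identity $\sum (h_1 \rightharpoonup f')(h_2 \rightharpoonup m) = h \rightharpoonup (f'(m))$ together with the Koszul signs you flag, exactly as carried out in the paper.
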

\begin{proof}
	(1) For any $h \in H, f, f' \in \End_{A^{op}}^{\bullet}(M_{\bullet})$ and $m \in M_i$,
	\begin{align*}
		&((h \rightharpoonup (ff'))(m)  = \sum\limits_{(h)}h_1 \rightharpoonup\big(ff'(Sh_2 \rightharpoonup m)\big) \\
 = & \sum\limits_{(h)} h_1 \rightharpoonup f\big((Sh_2) h_3\rightharpoonup f'(Sh_4 \rightharpoonup m)\big) = \sum\limits_{(h)} h_1 \rightharpoonup f\big(Sh_2 \rightharpoonup \big((h_3\rightharpoonup f')(m)\big)\big) \\
		 = & \sum\limits_{(h)} (h_1 \rightharpoonup f)(h_2 \rightharpoonup f')(m).
	\end{align*}
It follows that $h \rightharpoonup (ff')=\sum\limits_{(h)} (h_1 \rightharpoonup f)(h_2 \rightharpoonup f')$.

Since $(h \rightharpoonup \id_{M_{\bullet}})(m) = \sum\limits_{(h)}h_1 \rightharpoonup \id_{M_{\bullet}}(Sh_2 \rightharpoonup m) 
= \varepsilon(h) (m)$, $(h \rightharpoonup \id_{M_{\bullet}})=\varepsilon(h) \id_{M_{\bullet}}$.

Hence $\End_{A^{op}}^{\bullet}(M_{\bullet})$ is a graded $H$-module algebra.
In fact, the differential $d_{\Hom}$ is an $H$-morphism.
	\begin{align*}
		& (d_{\Hom}(h \rightharpoonup f))(m)  = d^M \circ (h \rightharpoonup f)(m) - (-1)^{|f|}(h \rightharpoonup f) \circ d^M(m) \\
		 = &\sum_{(h)} d^M(h_1 \rightharpoonup f(Sh_2 \rightharpoonup m)) - (-1)^{|f|} h_1 \rightharpoonup f(Sh_2 \rightharpoonup d^M(m)) \\
		= &\sum_{(h)} h_1 \rightharpoonup (d^M  f (Sh_2 \rightharpoonup m)) - (-1)^{|f|}h_1 \rightharpoonup (f  d^M(Sh_2 \rightharpoonup m)) \\
		= & ( h \rightharpoonup d_{\Hom}(f))(m).
	\end{align*}
	It follows that $\End_{A^{op}}^{\bullet}(M_{\bullet})$ is a differential graded left $H$-module algebra.
	
	(2) For any $g,g' \in \End_{B^{op}}^{\bullet}(X_{\bullet})$ and  $x \in X_{\bullet}$, by \eqref{H-comod on right Hom set}, $$\rho(g')(x) = \sum g'_{-1} \otimes g'_0(x) = \sum\limits_{(x)}  g'(x_0)_{-1}S^{-1}x_{-1} \otimes g'(x_0)_0,$$
	and $$\rho(g)\big(g'(x_0)_0\big)= \sum g_{-1}\otimes g_0\big(g'(x_0)_0\big)=\sum\limits_{(g'(x_0)_0)}  g(g'(x_0)_0)_{-1}(S^{-1}g'(x_0)_{-1}) \otimes g(g'(x_0)_0)_0.$$
Then
	\begin{align*}
		& \sum\limits_{(g),(g')} g_{-1}g'_{-1} \otimes g_0g'_0(x) = \sum\limits_{(g),(x),g'(x_0)} g_{-1} g'(x_0)_{-1}S^{-1}x_{-1} \otimes g_0(g'(x_0)_0)\\
		 = & \sum\limits_{(x),(g'(x_0)),(g(g'(x_0)_0))} g(g'(x_0)_0)_{-1}(S^{-1}g'(x_0)_{-1}) g'(x_0)_{-2}S^{-1}x_{-1} \otimes g(g'(x_0)_0)_0 \\
		 = & \sum\limits_{(x),(gg'(x_0))} g(g'(x_0))_{-1}S^{-1}x_{-1} \otimes g(g'(x_0))_0 \\
		 = & \sum\limits_{(gg')} (gg')_{-1} \otimes (gg')_0(x).
	\end{align*}
	It follows that  $\rho(gg') = \rho(g) \rho(g')$.

Obviously, $(\varepsilon \otimes 1)\rho(g) = g$. Hence $\End_{B^{op}}^{\bullet}(X_{\bullet})$ is a graded left $H$-comodule algebra. The differential $d_{\Hom}$ is also an $H$-comodule morphism as shown next. Since
	\begin{align*}
		&\rho(d^X \circ g)(x)  = \sum_{(x)} (d^X \circ g)(x_0)_{-1}S^{-1}x_{-1} \otimes (d^X \circ g)(x_0)_0 \\
		 = & \sum_{(x)} g(x_0)_{-1}S^{-1}x_{-1} \otimes d^X (g(x_0)_0) \\
		 = & \sum_{(x)} g_{-1} \otimes d^X  (g_0(x)),
	\end{align*}
it follows that
\begin{align*}
		&\rho(d_{\Hom} (g))(x)  = \sum_{(x)} d_{\Hom} (g)(x_0)_{-1}S^{-1}x_{-1} \otimes d_{\Hom} ( g)(x_0)_0 \\
		 = & \sum_{(x)} g(x_0)_{-1}S^{-1}x_{-1} \otimes d_{\Hom} (g(x_0)_0) \\
		 = & \sum_{(x)} g_{-1} \otimes d_{\Hom} (g_0)(x).
	\end{align*}
Hence $\rho(d_{\Hom} (g))= \sum_{(g)} g_{-1} \otimes d_{\Hom} (g_0)$ and $\End_{B^{op}}^{\bullet}(X_{\bullet})$ is a differential graded left $H$-comodule algebra.
	
	(3) For any $f,f' \in \End_{A^{op}}^{\bullet}(M_{\bullet}), g,g' \in \End_{B^{op}}^{\bullet}(X_{\bullet}), m \in M_i, x \in X_l$,
	\begin{align*}
	& \varphi\big((f\#g) (f'\#g')\big)(m\#x)  = (-1)^{|g||f'|}\varphi\big(\sum_{(g)} f(g_{-1} \rightharpoonup f') \# g_0g'\big)(m\#x) \\
		 = &(-1)^{|g||f'|} (-1)^{|m|(|g| + |g'|)} \sum_{(g),(g')} f(g_{-2} \rightharpoonup f')(g_{-1}g'_{-1} \rightharpoonup m) \# g_0g'_0(x) \\
		 = &(-1)^{|g||f'|}(-1)^{|m|(|g| + |g'|)} \sum_{(g),(g')} f\big(g_{-1} \rightharpoonup f'(g'_{-1} \rightharpoonup m)\big) \# g_0g'_0(x) \\
		 = &(-1)^{|m||g'|} \sum_{(g')} \phi(f\#g)\big(f'(g'_{-1} \rightharpoonup m) \# g'_0(x)\big) \\
		 = & \varphi(f\#g)\varphi(f'\#g')(m\#x).
	\end{align*}
Hence  $\varphi\big((f\#g) (f'\#g')\big) = \varphi(f\#g)\varphi(f'\#g').$

	Clearly $\varphi(\id_{M_{\bullet}} \# \id_{X_{\bullet}}) = \id_{\mathrm{Tot}(M_{\bullet}\#X_{\bullet})}$. So $\varphi$ is an algebra morphism.

It is left to show that $\varphi$ is a cochain map.
	\begin{align*}
		& (d_{\Hom} \circ \varphi (f \# g))(m \# x) \\
		= & d^{M\#X}(\varphi (f \# g)(m \# x)) - (-1)^{|f| + |g|} \varphi (f \# g)(d^{M\#X} (m \# x)) \\
		= & (-1)^{|g| |m|} d^{M\#X}\big(\sum_{(g)} f(g_{-1} \rightharpoonup m) \# g_0(x)\big)
		    - (-1)^{|f| + |g|} \varphi (f \# g)\big(d^{M}(m) \# x + (-1)^{|m|} m \# d^X(x)\big) \\
		= & (-1)^{|g| |m|} \big(\sum_{(g)} d^M(f(g_{-1} \rightharpoonup m)) \# g_0(x) + (-1)^{|f|+|m|} f(g_{-1} \rightharpoonup m) \# d^X(g_0(x))\big) \\
		& - (-1)^{|f| + |g|} \sum_{(g)} \big( (-1)^{|g|(|m| + 1)}f(g_{-1} \rightharpoonup d^{M}(m)) \# g_0(x) + (-1)^{|m| + |g||m|} f(g_{-1} \rightharpoonup m) \# g_0(d^X(x)) \big) \\
		= &\sum_{(g)} (-1)^{|g||m|} d_{\Hom}(f)(g_{-1} \rightharpoonup m) \# g_0(x) + (-1)^{|f| + (|g|+1)|m|} f(g_{-1} \rightharpoonup m) \# d_{\Hom}(g_0)(x) \\
		= & \varphi (d_{\Hom}(f) \# g + (-1)^{|f|} f \# d_{\Hom}(g))(m \# x) \\
		= & \varphi (d_{\Hom}(f \# g))(m \# x).
	\end{align*}
	Therefore, $d_{\Hom} \circ \varphi  = \varphi \circ d_{\Hom}$, and the conclusion follows.
\end{proof}

By taking the homology, then $\Ext^{\bullet}_{A^{op}}(M, M) = \bigoplus_{n \geq 0} \Ext^n_{A^{op}}(M, M)$ is a graded left $H$-module algebra, and $\Ext^{\bullet}_{B^{op}}(X, X)$ is a graded left $H$-comodule algebra. Hence we can form the graded Takeuchi smash product algebra $\Ext^{\bullet}_{A^{op}}(M, M) \# \Ext^{\bullet}_{B^{op}}(X, X)$.

\begin{thm}\label{Ext-alg of smash product}
	Suppose that $M \in {\mathcal{M}_{A\#H}}$ is pseudo-coherent as $A$-module, and $X \in {^H\mathcal{M}_B}$ is pseudo-coherent as $B$-module. Then, as graded algebras,
	$$\Ext^{\bullet}_{(A\#B)^{op}}(M\#X, M\#X) \cong \Ext^{\bullet}_{A^{op}}(M, M) \# \Ext^{\bullet}_{B^{op}}(X, X).$$
\end{thm}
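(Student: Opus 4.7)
The plan is to realize the isomorphism as the cohomology of an explicit differential graded algebra morphism built via Proposition \ref{Hom-mod-comod-dga}. First I would choose a projective resolution $P_\bullet \to M \to 0$ in $\mathcal{M}_{A\#H}$ and, by applying Lemma \ref{relative-resolution}(2) to the pseudo-coherent $(H,B)$-Hopf module $X$, a resolution $Q_\bullet \to X \to 0$ in $^H\mathcal{M}_B$ whose terms have the form $Q_n = V_n \otimes B$ with $V_n$ a finite-dimensional left $H$-comodule. Because $(A\#H)_A$ is free by Lemma \ref{proj-Takeuchi-smash}, $P_\bullet$ also serves as a projective $A$-resolution of $M$; and each $Q_n$ is finitely generated free as a right $B$-module. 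Hence $H^\bullet(\End^\bullet_{A^{op}}(P_\bullet)) = \Ext^\bullet_{A^{op}}(M,M)$, $H^\bullet(\End^\bullet_{B^{op}}(Q_\bullet)) = \Ext^\bullet_{B^{op}}(X,X)$, and by Corollary \ref{proj-mod of smash product'} the complex $\mathrm{Tot}(P_\bullet \# Q_\bullet)$ is a projective resolution of $M\#X$ over $A\#B$, so its endomorphism dga computes $\Ext^\bullet_{(A\#B)^{op}}(M\#X, M\#X)$. Proposition \ref{Hom-mod-comod-dga}(3) then delivers the dga morphism
$$\varphi: \End^\bullet_{A^{op}}(P_\bullet)\,\#\,\End^\bullet_{B^{op}}(Q_\bullet) \longrightarrow \End^\bullet_{(A\#B)^{op}}(\mathrm{Tot}(P_\bullet \# Q_\bullet)).$$

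The second step is to compute the cohomology of the source as a graded algebra. The crucial observation is that for any dg left $H$-module algebra $\Lambda$ and dg left $H$-comodule algebra $\Gamma$, the differential $d(\lambda\#\gamma) = d_\Lambda(\lambda)\#\gamma + (-1)^{|\lambda|}\lambda\#d_\Gamma(\gamma)$ on $\Lambda\#\Gamma$ carries no $H$-twist and coincides with the tensor-product differential on $\Lambda\otimes\Gamma$. Over the field $k$, the Künneth formula therefore yields a natural graded isomorphism $H^\bullet(\Lambda\#\Gamma) \cong H^\bullet(\Lambda)\otimes H^\bullet(\Gamma)$. Since $d_\Lambda$ is an $H$-module map and $d_\Gamma$ an $H$-comodule map, the $H$-action descends to make $H^\bullet(\Lambda)$ a graded $H$-module algebra compatibly with Lemma \ref{H-mod str on Ext-group}, and the $H$-coaction descends to make $H^\bullet(\Gamma)$ a graded $H$-comodule algebra compatibly with Lemma \ref{H-comod str on Ext-group}. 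The chain-level smash-product multiplication then induces precisely the graded Takeuchi smash-product multiplication on $H^\bullet(\Lambda)\,\#\,H^\bullet(\Gamma)$. Applied to our resolutions, this identifies the source of $H^\bullet(\varphi)$ with $\Ext^\bullet_{A^{op}}(M,M)\,\#\,\Ext^\bullet_{B^{op}}(X,X)$ as graded algebras.

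To finish, I would check that the resulting graded algebra morphism $H^\bullet(\varphi): \Ext^\bullet_{A^{op}}(M,M)\,\#\,\Ext^\bullet_{B^{op}}(X,X) \to \Ext^\bullet_{(A\#B)^{op}}(M\#X, M\#X)$ is bijective by comparison with Proposition \ref{decom-Ext-group'}(3). Inspecting the chain-level formula
$$\varphi(f\#g)(m\#x) = (-1)^{|g||m|}\sum_{(g)} f(g_{-1}\rightharpoonup m)\#g_0(x),$$
one sees that, up to the Koszul sign, it is precisely the formula for the map $\phi$ of Proposition \ref{decom-Ext-group'}(2); consequently $H^\bullet(\varphi)$ realizes the linear isomorphism of Proposition \ref{decom-Ext-group'}(3), which gives the required bijectivity.

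The main obstacle I anticipate is the Künneth step for the smash-product dga: one must verify that the $H$-module and $H$-comodule structures on the Ext-algebras induced by passing to cohomology of the endomorphism dgas agree with the canonical ones of Lemmas \ref{H-mod str on Ext-group} and \ref{H-comod str on Ext-group} (independence of the chosen resolutions), and carefully track the Koszul signs so that the chain-level Takeuchi smash-product multiplication descends to the graded Takeuchi smash-product multiplication on cohomology. The remainder is essentially a diagrammatic check, since the explicit formulas for $\varphi$ and for $\phi$ in Proposition \ref{decom-Ext-group'}(2) are parallel.
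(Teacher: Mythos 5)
Your proposal is correct and follows essentially the same route as the paper: choose a projective resolution $P_\bullet$ of $M_{A\#H}$ and a resolution $Q_\bullet$ of $X$ in $^H\mathcal{M}_B$ with terms $V_n\otimes B$ via Lemma \ref{relative-resolution}, form the dga morphism $\varphi$ of Proposition \ref{Hom-mod-comod-dga}(3), and conclude via Proposition \ref{decom-Ext-group'}(3) that it is a quasi-isomorphism. Your extra care with the K\"unneth identification of the cohomology of the source dga with the graded Takeuchi smash product of the Ext-algebras is exactly the step the paper compresses into ``the conclusion follows by taking the homology.''
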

\begin{proof}
	Let $P_{\bullet}$ be a projective resolution of $M_{A\#H}$. By Lemma \ref{relative-resolution}, there exists an exact sequence
	$$\cdots \longrightarrow Q_n \longrightarrow \cdots \longrightarrow Q_0 \longrightarrow X \longrightarrow 0 $$
	in ${^H\mathcal{M}_B}$, where all the $Q_n$'s are finitely generated free as $B$-module.
	
	By Proposition \ref{Hom-mod-comod-dga}, $\End^{\bullet}_{A^{op}}(P_{\bullet})$ is a differential graded left $H$-module algebra, and $\End^{\bullet}_{B^{op}}(Q_{\bullet})$ is a differential graded left $H$-comodule algebra, and
	$$\varphi: \End^{\bullet}_{A^{op}}(P_{\bullet}) \# \End^{\bullet}_{B^{op}}(Q_{\bullet}) \to \End^{\bullet}_{(A\#B)^{op}}(\mathrm{Tot}(P_{\bullet}\#Q_{\bullet}))$$
	$$f \# g \mapsto \big(p\#q \mapsto (-1)^{|g||p|}\sum_{(g)} f(g_{-1} \rightharpoonup p) \# g_0(q)\big)$$
	is a differential graded algebra morphism.
	By Proposition \ref{decom-Ext-group'} (3), it is a quasi-isomorphism.
	Then the conclusion follows by taking the homology.
\end{proof}

\section{Application to Takeuchi smash products of AS-regular algebras}
We recall some definitions first.
\subsection{Artin-Schelter regular algebras}
A graded algebra $A = \bigoplus_{n \in \Z} A_n$ is called {\it connected}, if $A_{<0} = 0$ and $A_0$ is the base field $k$. For any connected graded algebra $A$,  the left global dimension and right global dimension of $A$ are equal, which are equal to the projective dimension of the trivial module $k \cong A/A_{\geq 1}$ or $k_A$.

\begin{defn}
	A connected graded algebra $A$ is called {\it Artin-Schelter regular} (for short, AS-regular) of dimension $d$ for some integer $d \geq 0$, if
	\begin{enumerate}
		\item $A$ has global dimension $d$;
		\item $\Ext^i_{A}(k,A) \cong
		\begin{cases}
			0 & i \neq d \\
			k & i = d.
		\end{cases}$
	\end{enumerate}

%
	If $A$ is a left AS-regular algebra of dimension $d$, then it follows from the Ischebeck's spectral sequence that the right version of the condition (2) holds.
\end{defn}

Let $A$ be a connected graded algebra. By \cite[Lemma 1.2]{RRZ1}, $A$ is AS-regular of dimension $d$ if and only if it is a graded skew Calabi-Yau algebra of dimension $d$, which is defined in the following.

\begin{defn} A $k$-algebra $A$ is called {\it skew Calabi-Yau algebra} of dimension $d$, if
	\begin{itemize}
		\item [(i)] $A$ is {\it homologically smooth}, that is, as $A^e$-module, $A$ has a finitely generated projective resolution of finite length;
		\item [(ii)] there exists an automorphism $\mu$ of $A$, such that
		$$\Ext^i_{A^e}(A,A^e) \cong
		\begin{cases}{\tiny {\large }}
			0, & i \neq d\\
			A^{\mu}, & i = d
		\end{cases}$$
		as $A$-$A$-bimodules.
	\end{itemize}
The automorphism $\mu$ is unique up to an inner automorphism, which is called a {\it Nakayama automorphism} of $A$, denoted sometimes by $\mu_A$.

Graded skew Calabi-Yau algebras are defined similarly in the graded module category for graded algebras. 
\end{defn}
\subsection{Takeuchi smash products of AS-regular algebras}
A Hopf algebra $H = \bigoplus_{n \in \Z} H_n$ is said to be a {\it graded Hopf algebra} if both of its algebra and coalgebra structure are graded with respect to the grading $H = \bigoplus_{n \in \Z} H_n$, and its antipode is also graded.
Let $H$ be a graded Hopf algebra, and $A$ be a graded algebra which is also an $H$-module algebra. If $H_m \rightharpoonup A_n \subseteq A_{m+n}$ for all $n$ and $m$, then $A$ is called an {\it $H$-module graded algebra}.
Let $B$ be a graded algebra with an $H$-comodule algebra structure $\rho$. If $\rho(B_n) \subseteq \bigoplus_{i} H_i \otimes B_{n-i}$ for all $n$, then $B$ is called {\it an $H$-comodule graded algebra}. In this case,
it is clear that the Takeuchi smash product $A \# B$ is a graded algebra with $(A\#B)_n = \bigoplus_i A_i \otimes B_{n-i}$.

The Hopf algebra $H = k\langle g^{\pm 1}, X\rangle$ given in Example \ref{ore-ext-Hopf-alg} is a graded Hopf algebra by setting $\deg(g) = 0$ and $\deg(X) = 1$. Suppose $A$ is a graded algebra, with a graded automorphism $\sigma$ of degree $0$ and a graded $\sigma$-derivation $\delta$ of degree $1$. Then $A$ is an $H$-module graded algebra, $B=k[X]$ is an $H$-comodule graded algebra, and the Ore extension $A[x, \sigma, \delta]$ is the graded Takeuchi smash product $A \# B$.

\begin{thm}\label{AS-regular-TSP} Let $A$ and $B$ be AS-regular algebras of dimension $d_1$ and $d_2$ respectively. If $H$ is a graded Hopf algebra, $A$ is an $H$-module graded algebra and $B$ is an $H$-comodule graded algebra, such that $H \rightharpoonup A_{\geq 1} \subseteq A_{\geq 1}$ or $\rho(B_{\geq 1}) \subseteq H \otimes B_{\geq 1}$, then $A\#B$ is an AS-regular algebra of dimension $d_1+d_2$.
\end{thm}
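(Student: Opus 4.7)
The plan is to verify that $A\#B$ is a connected graded algebra of global dimension $d_1 + d_2$ with $\Ext^i_{A\#B}(k, A\#B) = \delta_{i, d_1 + d_2} \cdot k$. Connectedness is immediate from $(A\#B)_n = \bigoplus_i A_i \otimes B_{n-i}$ and $A_0 = B_0 = k$. Under either hypothesis, a direct verification shows the map $\varepsilon_A \otimes \varepsilon_B : A\#B \to k$ is an algebra morphism, making $k$ the trivial left $A\#B$-module. Moreover, under the condition $H \rightharpoonup A_{\geq 1} \subseteq A_{\geq 1}$ the quotient $A \to k$ is $H$-equivariant, so $k$ carries an $A\#H$-module structure; under $\rho(B_{\geq 1}) \subseteq H \otimes B_{\geq 1}$, the quotient $B \to k$ becomes a left $(H, B)$-Hopf module. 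AS-regularity of $A$ and $B$ ensures $k$ is pseudo-coherent as an $A$-module and as a $B$-module.

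I carry out the main argument under the first condition; the other case is handled by a dual argument using the right-sided Proposition \ref{right-case-spec-seq} in place of the left-sided spectral sequence. Applying Corollary \ref{proj-mod of smash product} to $M = k \in {}_{A\#H}\mathcal{M}$ and $X = k \in {}_B\mathcal{M}$ gives $\pd_{A\#B}(k) = \pd_{A\#B}(k \# k) \leq d_1 + d_2$, yielding the upper bound $\gld(A\#B) \leq d_1 + d_2$. For the AS-regular Ext-condition, invoke the spectral sequence of Proposition \ref{Ext-spec-seq-of-smash-prod-mod} with $M = k$, $X = k$, $T = A\#B$:
\[
E_2^{p,q} = \Ext^p_B\bigl(k, \Ext^q_A(k, A\#B)\bigr) \Longrightarrow \Ext^{p+q}_{A\#B}(k, A\#B).
\]
The algebra embedding $A \hookrightarrow A\#B$, $a \mapsto a\#1$ (valid because $\rho(1_B) = 1_H \otimes 1_B$) makes $_A(A\#B)$ free of the form $A \otimes B$ with $A$ acting on the first factor, so $\Ext^q_A(k, A\#B) \cong \Ext^q_A(k, A) \otimes B$ as vector spaces; AS-regularity of $A$ makes this vanish except at $q = d_1$, where it has underlying space $B$.

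The principal obstacle is to determine the left $B$-module structure on $\Ext^{d_1}_A(k, A\#B)$ coming from Lemma \ref{Hom-Tensor-adj}. Unpacking $(b \rightharpoonup f)(p) = \sum b_0 f(S^{-1} b_{-1} \rightharpoonup p)$ together with the induced $H$-module structure on $\Hom_A(P_\bullet, A)$ (the instance of the same lemma with $B$ replaced by $H$) and the iterated coaction of $B$ shows that, under the identification $\Ext^{d_1}_A(k, A\#B) \cong \Ext^{d_1}_A(k, A) \otimes B$, the $B$-action takes the form
\[
b \cdot (v \otimes b') = v \otimes \phi_\chi(b)\, b',
\]
where $\chi : H \to k$ is the character describing the $H$-action on the $1$-dimensional space $\Ext^{d_1}_A(k, A)$ and $\phi_\chi(b) := \sum \chi(b_{-1}) b_0$. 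A short calculation using coassociativity and the antipode identity $\sum S(h_1) h_2 = \varepsilon(h) 1_H$ shows $\phi_\chi$ is an algebra automorphism of $B$ with inverse $\phi_{\chi \circ S}$, so the twisted module ${}_{\phi_\chi} B$ is isomorphic to ${}_B B$ via $b \mapsto \phi_\chi(b)$. Combined with AS-regularity of $B$, this yields $E_2^{p, d_1} \cong \Ext^p_B(k, B) = \delta_{p, d_2} \cdot k$ and $E_2^{p, q} = 0$ for $q \neq d_1$. The spectral sequence collapses at $E_2$ with a single nonzero entry at $(p, q) = (d_2, d_1)$, giving $\Ext^i_{A\#B}(k, A\#B) = \delta_{i, d_1 + d_2} \cdot k$; this confirms the AS-regular condition and supplies the matching lower bound $\gld(A\#B) \geq d_1 + d_2$.
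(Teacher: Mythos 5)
Your proposal is correct and follows essentially the same route as the paper's proof: the upper bound via Corollary \ref{proj-mod of smash product}, the spectral sequence of Proposition \ref{Ext-spec-seq-of-smash-prod-mod}, the identification $\Ext^{d_1}_A(k,A\#B)\cong \Ext^{d_1}_A(k,A)\otimes B$ as $B$-modules, and the observation that the resulting twist $\phi_\chi$ (the paper's $\Xi^l_\alpha$) is an algebra automorphism so the twisted module is free; the second hypothesis is likewise handled by the right-sided analogues.
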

\begin{proof}
	Suppose $H \rightharpoonup A_{\geq 1} \subseteq A_{\geq 1}$ (which is automatic if $H$ is positively graded). Then the trivial module $k \cong A/A_{\geq 1}$ is an $A\#H$-module.
	By Corollary \ref{proj-mod of smash product},  $\pd_{A\#B} \, k \leq \pd_A \, k + \pd_B \, k = d_1 + d_2$.
	It follows from the spectral sequence in Proposition \ref{Ext-spec-seq-of-smash-prod-mod} that
	$$\Ext^i_{A\#B}(k, A\#B) \cong \begin{cases}
		0, & i \neq d_1+d_2 \\
		\Ext^{d_2}_B(k, \Ext^{d_1}_A(k, A\#B)), & i = d_1+d_2.
	\end{cases}$$

	We claim that $\Ext^{d_1}_A(k, A\#B) \cong B$ as left $B$-modules. Let $P_{\bullet}$ be a projective resolution of $_{A\#H}k$. Then there is a natural $B$-module complex morphism
	$$\Hom_{A}(P_{\bullet}, A) \otimes B \To \Hom_A(P_{\bullet}, A\#B)$$
	where the $B$-module structure on $\Hom_{A}(P_i, A) \otimes B$ is given by $b \rightharpoonup (f \otimes b') = \sum_{(b)} (b_{-1} \rightharpoonup f) \otimes b_0 b'$, and  the $B$-module structure on $\Hom_{A}(P_i, A \# B)$ is given by \eqref{B-mod-on-Hom}. Since $_Ak$ is pseudo-coherent by \cite[Proposition 3.1]{SZ}, it follows that  $\Hom_{A}(P_{\bullet}, A) \otimes B \To \Hom_A(P_{\bullet}, A\#B)$ is a quasi-isomorphism of $B$-module complex. Thus, with the induced $B$-module structures, $\Ext^{d_1}_A(k, A\#B) \cong \Ext^{d_1}_A(k, A) \otimes B$ as $B$-modules.
	Since $\Ext_A^{d_1}(k, A)$ is an one-dimensional $k$-vector space, then there is algebra morphism $\alpha: H \to k$, such that $h \rightharpoonup e = \alpha(h) e$ for all $h \in H$ and $e \in \Ext_A^{d_1}(k, A)$.
	This implies that $\Ext_A^{d_1}(k, A\#B) \cong {^{\Xi^l_{\alpha}} B}$, where $\Xi^l_{\alpha} : B \to B$ is the algebra automorphism defined by $\Xi^l_{\alpha}(b) = \sum_{(b)} \alpha(b_{-1}) b_0$.
	Thus $\Ext_A^{d_1}(k, A\#B) \cong B$ as left $B$-modules, the claim is proved.
	
	Since $B$ is AS-regular, then $\Ext^{d_1+d_2}_{A\#B}(k, A\#B) \cong \Ext^{d_2}_B(k, B) \cong k$. It follows that $A\#B$ is a $(d_1+d_2)$-dimensional AS-regular algebra.
	
	Suppose $\rho(B_{\geq 1}) \subseteq H \otimes B_{\geq 1}$. Then $k \cong B/{B_{\geqslant 1}} \in ^H\!\mathcal{M}_B$. Then 
	it follows Corollary \ref{proj-mod of smash product'} and Proposition \ref{right-case-spec-seq}  that  $\pd_{(A\#B)^{op}} \, k \leq \pd_{A^{op}} \, k + \pd_{B^{op}} \, k = d_1 + d_2$ and
	$$\Ext^i_{(A\#B)^{op}}(k, A\#B) \cong \begin{cases}
		0, & i \neq d_1+d_2 \\
		\Ext^{d_1}_{A^{op}}(k, \Ext^{d_2}_{B^{op}}(k, A\#B)), & i = d_1+d_2.
	\end{cases}$$

    Since $A$ is AS-regular, by \cite[Proposition 3.1]{SZ}, $k_B$ is pseudo-coherent. It follows from Lemma \ref{relative-resolution} that $k \cong B/{B_{\geqslant 1}}$ has a  resolution $Q_\bullet \to k \to 0$ in $^H\!\mathcal{M}_B$ such that $Q_n = V_n \otimes B$ for some finite-dimensional left $H$-comodule $V_n$. Then, $\Hom_{B^{op}}(Q_n, A\#B)$ has an  $A^{op}$-module structure given by Lemma \ref{right-Hom-Tensor-adj}, that is, 
    $(fa)(y)=f(y_0)(S^{-1}y_{-1} \rightharpoonup a)$, and  $\Hom_{B^{op}}(Q_n, B)$ has a left $H$-comodule structure given by   $\rho(g)(y)=\sum_{(y),(g(y_0))} (g(y_0)_{-1} S^{-1}y_{-1}) \otimes g(y_0)_0$ as defined in \eqref{H-comod on right Hom set} (see Proposition \ref{decom-Ext-group'}).
    
    If we endow $A \otimes \Hom_{B^{op}}(Q_n, B)$ with a right $A$-module structure by
    $$(a \otimes g) a'= a( g_{-1}\rightharpoonup a') \otimes g_0.$$ 
    Then,
   $A \otimes \Hom_{B^{op}}(Q_n, B) \to \Hom_{B^{op}}(Q_n, A\#B), a \otimes g \mapsto \big([a, g]: Q \to A\#B, y \mapsto a\# g(y)\big)$
	is an $A^{op}$-module isomorphism. In fact.
    $$[a( g_{-1}\rightharpoonup a'), g_0](y)= a( g_{-1}\rightharpoonup a')  \# g_0(y)= a( g(y_0)_{-1} S^{-1}y_{-1}\rightharpoonup a') \# g(y_0)_0,\, \textrm{ and }$$
	$$([a, g] a')(y) =\big([a, g](y_0)\big) (S^{-1} y_{-1} \rightharpoonup a')=a\big(g(y_0)_{-1}S^{-1} y_{-1} \rightharpoonup a'\big) \#  g(y_0)_0.$$
It follows that $\Ext^{d_2}_{B^{op}}(k, A\#B) \cong A \otimes \Ext^{d_2}_{B^{op}}(k, B) \cong (A \otimes k)_A \cong A_A.$ Therefore, 
	$$\Ext^{d_1+d_2}_{(A\#B)^{op}}(k, A\#B) \cong \Ext^{d_1}_{A^{op}}(k, \Ext^{d_2}_{B^{op}}(k, A\#B)) \cong \Ext^{d_1}_{A^{op}}(k, A) \cong k.$$ 
Hence  $A\#B$ is an AS-regular algebra of dimension $d_1+d_2$.
\end{proof}

So graded Ore extensions preserve the AS-regularity, which  was proved in \cite[Proposition 2]{AST}. The ungraded case was proved in \cite{LWW} in terms of skew Calabi-Yau algebras.
The Yoneda Ext-algebras recover many properties of AS-regular algebras as it is well known (see, say \cite{LPWZ1, LPWZ2}), for instance,  the Nakayama automorphisms.

\begin{thm}\cite[Theorem 4.2]{RRZ2}
	Let $A$ be a noetherian AS-regular algebra generated in degree $1$. Let $\mu_A$ and $\mu_E$ be the Nakayama automorphism of $A$ and $\Ext^\bullet_A(k, k)$ respectively.
	Then $\mu_A|_{A_1} = (\mu_E|_{\Ext^{1}_A(k, k)})^*$ where $A_1$ is identified with $\Ext^{1}_A(k, k)^*$.
\end{thm}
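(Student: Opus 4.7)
The plan is to prove the equality through a natural identification of $\Ext^1_A(k,k)$ with $A_1^*$, combined with the functoriality of Ext under graded automorphisms, and with the Frobenius structure that AS-regularity imposes on $E := \Ext^\bullet_A(k,k)$. Fix once and for all a minimal graded free resolution
$$P_\bullet : \quad \cdots \to A \otimes V_2 \to A \otimes V_1 \to A \otimes V_0 \to k \to 0$$
of the trivial left $A$-module $k$, with $V_i = \Tor^A_i(k,k)$. By minimality $V_0 = k$ and $V_1 \cong A_1$ canonically, and the noetherian AS-regular hypothesis in global dimension $d$ (with $A$ generated in degree $1$) makes $E$ a finite-dimensional connected graded Frobenius algebra with $E^i = V_i^*$, top $E^d \cong V_d^* \cong k$, and a non-degenerate pairing $E^i \otimes E^{d-i} \to E^d$ coming from Yoneda multiplication.

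The first step is to reduce the theorem to an assertion about the functorially induced action. Any graded algebra automorphism $\sigma$ of $A$ preserves the augmentation ideal and so acts trivially on $k$; it lifts (uniquely up to homotopy) to a chain endomorphism $\tilde\sigma$ of $P_\bullet$, and $\sigma_* := \HH^*(\Hom_A(\tilde\sigma, k))$ is a graded algebra automorphism of $E$. By minimality of the resolution, under the identification $E^1 = V_1^* \cong A_1^*$ the restriction $\sigma_*|_{E^1}$ equals $(\sigma|_{A_1})^*$. Hence the theorem is equivalent to the identity $(\mu_A)_*|_{E^1} = \mu_E|_{E^1}$, and the hypothesis that $A$ is generated in degree $1$ ensures that such an identity in degree $1$ is enough to determine $\mu_A$ on the generators of $A$.

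The second and main step is to establish $(\mu_A)_* = \mu_E$ on $E^1$ using the defining skew Calabi-Yau property of $\mu_A$. By definition $\Ext^d_{A^e}(A, A^e) \cong {}^{1}A^{\mu_A}$; applying $-\otimes_A^{\mathbb{L}} k$ gives $\Ext^d_A(k, A) \cong k^{\mu_A}$ as right $A$-modules, where the right action factors through the augmentation twisted by $\mu_A$. Realizing this top Ext as $\HH^d(\Hom_A(P_\bullet, A))$ and combining it with the Yoneda product yields an explicit description of the Frobenius pairing $E^i \otimes E^{d-i} \to E^d$ whose failure to be symmetric is governed precisely by the right $A$-action coming from $\mu_A$. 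On the other hand, $\mu_E$ is by definition the unique graded algebra automorphism of $E$ such that the Frobenius pairing is $\mu_E$-twisted symmetric. Matching these two descriptions of the asymmetry forces $\mu_E|_{E^1} = (\mu_A)_*|_{E^1}$, and combining with the first step yields the theorem.

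The main obstacle is the sign-and-degree bookkeeping in the second step: the Frobenius pairing on $E$ carries Koszul signs from the complex $\Hom_A(P_\bullet, A)$ and an internal degree shift by the AS-index of $A$, and one must translate the right $A$-action on $\Ext^d_A(k,A)$ faithfully into the twisted symmetry that governs $\mu_E$. This is cleanest when encoded via the $A_\infty$-structure on $E$, whose higher Massey products reconstruct $A$ and under which $\mu_A$ and $\mu_E$ correspond tautologically; alternatively it can be handled by a direct minimal-resolution calculation using that $A$ is generated in degree $1$, so that the comparison is forced already in cohomological degree $1$.
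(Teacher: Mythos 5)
First, a point of comparison with the paper: this statement is not proved in the paper at all --- it is quoted verbatim from \cite[Theorem 4.2]{RRZ2} and used as an ingredient for the subsequent proposition on Nakayama automorphisms of Takeuchi smash products. So there is no in-paper argument to measure your proposal against; it has to stand on its own, and as written it does not.

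The gap is in your second (main) step. You apply $-\otimes_A^{\mathbb{L}}k$ to $\Ext^d_{A^e}(A,A^e)\cong {}^{1}A^{\mu_A}$ and conclude that $\Ext^d_A(k,A)\cong k^{\mu_A}$ as right $A$-modules, and then claim that this right action ``governs precisely'' the asymmetry of the Frobenius pairing on $E$. But $\Ext^d_A(k,A)$ is a one-dimensional graded vector space concentrated in a single internal degree, so $A_{\geq 1}$ must act on it by zero and $A_0=k$ by scalars: the right $A$-module structure is the trivial one, and $k^{\mu_A}\cong k$ for \emph{every} graded automorphism $\mu_A$. In other words, the operation $A^{\mu_A}\otimes_A k\cong k$ erases the twist at exactly the point where you need it, so this mechanism cannot transport $\mu_A$ into the twisted symmetry of the pairing $E^i\otimes E^{d-i}\to E^d$. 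The information about $\mu_A$ lives in the full bimodule $A^{\mu_A}$ (equivalently, in how the last differential of the minimal free resolution of $k$ is twisted, or in the comparison between the left- and right-module Ext computations), and extracting it requires the $A_\infty$-structure on $E$ or an equivalent device --- which is precisely the part you defer to in your final paragraph with the phrase ``correspond tautologically.'' That correspondence is the whole content of \cite[Theorem 4.2]{RRZ2}, not a tautology, so the proposal assumes what it sets out to prove. Your first step (reducing the statement to $(\mu_A)_*|_{E^1}=\mu_E|_{E^1}$ via functoriality and minimality of the resolution) is fine modulo the usual dual-versus-inverse conventions, but it only reformulates the problem.
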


As an application of Theorems \ref{Ext-alg of smash product} and \ref{AS-regular-TSP}, we can describe the Nakayama automorphism of the Takeuchi smash product of AS-regular algebras with a similar proof to Theorem \cite[Theorem 4.2]{SZL}.
For the definitions of homological determinants and codeterminant we refer \cite{KKZ} as the reference.

\begin{prop}
Suppose $A$ and $B$ are noetherian AS-regular algebras generated in degree $1$, and  $A$ is a graded $H$-module algebra, $B$ is a graded $H$-comodule for some Hopf algebra $H$. If $A\#B$ is noetherian, then the Nakayama automorphism $\mu_{A\#B}$ of $A\#B$ is given by
	$$\mu_{A\#B}(a\#b) = \sum_{(b)} \mu_A(g \rightharpoonup a) \# \hdet(b_{-1})\mu_B(b_0),$$
	where $\mu_A$ and $\mu_B$ are the Nakayama automorphisms of $A$ and $B$ respectively, $\hdet \in H^*$ and $g \in H$ are the homological determinant of the $H$-action on $A$ and the homological codeterminant of the $H$-coaction on $B$ respectively.
\end{prop}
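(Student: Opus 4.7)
The plan is to combine the quoted theorem of Reyes--Rogalski--Zhang with the identification of the Yoneda Ext-algebra provided by Theorem \ref{Ext-alg of smash product}. Since, by Theorem \ref{AS-regular-TSP}, $A\#B$ is a noetherian AS-regular algebra of dimension $d_1+d_2$ generated in degree $1$, its Nakayama automorphism $\mu_{A\#B}$ exists. Moreover, the right-hand side of the stated formula is readily checked to be an algebra automorphism of $A\#B$ (using that $\hdet$ is a character of $H$ and $g$ is a group-like element), so by generation in degree $1$ it suffices to verify agreement with $\mu_{A\#B}$ on $(A\#B)_1 = A_1 \oplus B_1$. The RRZ theorem reduces this, in turn, to a computation of $\mu_E|_{E^1}$, where $E := \Ext^\bullet_{A\#B}(k,k)$.

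First I would apply Theorem \ref{Ext-alg of smash product} with $M = X = k$ to obtain $E \cong E_A \# E_B$ as graded algebras, where $E_A := \Ext^\bullet_A(k,k)$ and $E_B := \Ext^\bullet_B(k,k)$ are finite-dimensional graded Frobenius algebras carrying their natural graded $H$-module and $H$-comodule algebra structures respectively. The one-dimensional top components $\Ext^{d_1}_A(k,k)$ and $\Ext^{d_2}_B(k,k)$ are, by the very definitions of $\hdet$ and $g$, the $H$-module and $H$-comodule determined by these characters/group-likes (up to the standard duality with $\Ext^{d_1}_A(k,A)$ and $\Ext^{d_2}_B(k,B)$).

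The core step, and expected main obstacle, is to compute the Nakayama automorphism of the graded Takeuchi smash product of Frobenius algebras $E_A \# E_B$. I would describe the Frobenius pairing on $E_A \# E_B$ explicitly as a twisted product of the Frobenius pairings of $E_A$ and $E_B$, with the twist coming from the $H$-action on $E_A$ and the $H$-coaction on $E_B$, and then compare the pairing of $\xi\#\eta$ against $\xi'\#\eta'$ with the pairing of $\xi'\#\eta'$ against $\xi\#\eta$. The expected outcome is a formula of the shape
$$\mu_E(\xi\#\eta) = \sum_{(\eta)} \hdet^{-1}(\eta_{-1})\,\mu_{E_A}(g^{-1} \rightharpoonup \xi) \,\#\, \mu_{E_B}(\eta_0),$$
in which the factors $\hdet^{-1}$ and $g^{-1}$ precisely record how the two Frobenius pairings must be twisted to be compatible with the smash-product multiplication. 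This is the Takeuchi-smash analogue of the ordinary smash-product computation carried out in \cite[Theorem 4.2]{SZL}.

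Finally, restricting the formula to $E^1 = \Ext^1_A(k,k) \oplus \Ext^1_B(k,k)$ and dualizing via $A_1 \cong \Ext^1_A(k,k)^*$ and $B_1 \cong \Ext^1_B(k,k)^*$, together with the compatibility of the $H$-(co)module structure on each side of the duality, yields the claimed formula on degree $1$. Because $A\#B$ is generated in degree $1$ and both sides of the claimed formula are already known to be algebra automorphisms, equality on degree $1$ gives equality everywhere. The hardest part will be the middle step: pinning down precisely where the antipode $S$ and the inverses $\hdet^{-1}$, $g^{-1}$ appear in $\mu_{E_A \# E_B}$, so that after dualization the result lands exactly on $\mu_A(g \rightharpoonup a)\,\#\,\hdet(b_{-1})\mu_B(b_0)$ rather than on a variant involving $S^{\pm 1}$ or flipped inverses.
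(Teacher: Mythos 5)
Your plan is exactly the route the paper takes: it proves the proposition by combining Theorem \ref{Ext-alg of smash product} (applied to $M=X=k$), Theorem \ref{AS-regular-TSP}, and the quoted Reyes--Rogalski--Zhang theorem, and explicitly defers the Frobenius-pairing computation for $E_A\#E_B$ to ``a similar proof to \cite[Theorem 4.2]{SZL}'' --- precisely the middle step you identify as the main work. The paper writes out no more detail than you do, so your proposal matches its argument in both strategy and level of completeness.
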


\section*{Acknowledgements} This research is partially supported by the National Science Foundation of China (Grant No. 11771085) and the National Key Research and Development Program of China (Grant No. 2020YFA0713200).

\thebibliography{plain}

\bibitem[AST]{AST} M. Artin, W. Schelter, J. Tate, Quantum deformations of $GL_n$, Comm. Pure Appl. Math. 44 (1991), 879--895.

\bibitem[Aus]{Aus}
M. Auslander, On the dimension of modules and algebras. III. Global dimension, Nagoya Math. J. 9 (1955), 67--77.

\bibitem[BO]{BO} P. A. Bergh, S. Oppermann, Cohomology of twisted tensor products, J. Algebra 320 (2008), 3327--3338.

%
%

\bibitem[CG]{CG}
S. Caenepeel, T. Gu\'{e}d\'{e}non, Projectivity of a relative Hopf module over the subring of coinvariants, in: J. Bergen, S. Catoiu, W. Chin (Eds.), Hopf Algebras, in: Lect. Notes Pure Appl. Math., V. 237, Marcel Dekker, New York, 2004, 97--108.



\bibitem[CSV]{CSV} A. Cap, H. Schichl, J. Van\v{z}ura, On twisted tensor products of algebras, Comm. Algebra 23 (1995), 4701--4735.
%
%
%
%
%
%
%
%

\bibitem[KKZ]{KKZ}
E. Kirkman, J. Kuzmanovich, and J. J. Zhang, Gorenstein subrings of invariants under Hopf algebra actions, J. Algebra 322 (2009), 3640--3669.



\bibitem[LPWZ1]{LPWZ1} D.-M. Lu, J.H. Palmieri, Q.-S. Wu, J.J. Zhang, Regular algebras of dimension 4 and their A1 Ext-algebras, Duke Math. J. 137 (3) (2007) 537--584.

\bibitem[LPWZ2]{LPWZ2} D.-M. Lu, J.H. Palmieri, Q.-S. Wu, J.J. Zhang, A-infinity structure on Ext-algebras, J. Pure Appl. Algebra 213 (11) (2009) 2017--2037.

\bibitem[LWW]{LWW}
L.-Y. Liu, S.-Q. Wang, and Q.-S. Wu. Twisted Calabi-Yau property of Ore extensions, J. Noncomm. Geo. 8 (2012), 587--609.

\bibitem[LWZ]{LWZ}	
L.-Y. Liu, Q.-S. Wu, and C. Zhu, Hopf action on Calabi-Yau algebras, New Trends in Noncommutative Algebra,
{\it Contemp. Math.} 562 (2012), 189--210.


%

\bibitem[Mar]{Mar} R. Mart\'{i}nez-Villa, Skew group algebras and their Yoneda algebras, Math. J. Okayama Univ. 43 (2001), 1--16.



\bibitem[Mo]{Mo}
S. Montgomery, Hopf algebras and their actions on rings.  CBMS Regional Conference Series in Mathematics 52 (1993).


%
%

\bibitem[RRZ1]{RRZ1}
M. Reyes, D. Rogalski, and J. J. Zhang, Skew Calabi-Yau algebras and homological identities, Adv. Math. 264 (2014), 308--354.
	
\bibitem[RRZ2]{RRZ2}
M. Reyes, D. Rogalski, and J. J. Zhang, Skew Calabi-Yau triangulated categories and Frobenius ext-algebras, Trans. Amer. Math. Soc. 369 ( 2017), 309--340.

\bibitem[Sw]{Sw}
M. Sweedler, Hopf Algebras, Benjamin, New York, 1969.


\bibitem[SZL]{SZL}
Y. Shen, G.-S. Zhou, and D.-M. Lu, Nakayama automorphisms of twisted tensor products, J. Algebra 504 (2018), 445--478.

%

\bibitem[SZ]{SZ} D. R. Stephenson, J. J. Zhang, Growth of graded Noetherian rings, Proc. Amer. Math. Soc. 125 (1997), 1593--1605.

\bibitem[Tak]{Tak}
M. Takeuchi, $\mathrm{Ext}_{\mathrm{ad}}(\mathrm{Sp} R,\mu^A) \cong \hat{\mathrm{Br}}(A/k)$, J. Algebra 67 (1980), 436--475.

\bibitem[Ul]{Ul}
K.-H. Ulbrich, Smash products and comodules of linear maps, Tsukuba J. Math. 14 (1990), 371--378.

%


\bibitem[WZ]{WZ}
Q.-S. Wu, C. Zhu, Skew group algebras of Calabi-Yau algebras, J. Algebra 340 (2011), 53--76.

%
\end{document}